\numberwithin{equation}{section}
\theoremstyle{plain}
\newtheorem{theorem}{Theorem}[section]
\newtheorem{lemma}[theorem]{Lemma}
\newtheorem{proposition}[theorem]{Proposition}
\theoremstyle{definition}
\newtheorem{definition}[theorem]{Definition}
\newtheorem{remark}[theorem]{Remark}
\begin{document}

\begin{frontmatter}

\title 
{Dynamical system for animal coat pattern model}
\runtitle{T$\hat{\rm o}$n Vi$\hat{\d{e}}$t T\d{a}}

\begin{aug}

\author{T\^{o}n Vi$\hat{\d{e}}$t  T\d{a}
\ead[label=e1]{tavietton AT agr.kyushu-u.ac.jp}}

\address{Center for Promotion of International Education and Research 
\\
Faculty of Agriculture, Kyushu University
\\
6-10-1 Hakozaki, Higashi-ku, Fukuoka 812-8581, Japan\\
\printead{e1}}

\end{aug}

\begin{abstract}
We construct a dynamical system for a reaction diffusion system due to Murray, which relies on the use of the Thomas system nonlinearities and describes the formation of  animal coat patterns. 
First, we prove existence and uniqueness of global positive {\it strong} solutions  to the system by using semigroup methods. 
Second, we show that the solutions are continuously dependent on initial values. 
Third,  we show that the dynamical system enjoys  exponential attractors whose fractal dimensions can be estimated. 
Finally, we give a numerical example. 
\end{abstract}

\begin{keyword}[class=MSC]
\kwd[Primary ]{35K57}
\kwd{35B41}
\kwd[; secondary ]{92B05}
\end{keyword}

\begin{keyword}
\kwd{Animal coat pattern}
\kwd{Reaction-diffusion system}
\kwd{Exponential attractor}
\kwd{Pattern formation}
\end{keyword}
\end{frontmatter}

\section {Introduction}

We consider a model of animal   coat patterns given by Murray (\cite{Murray81,Murray2003}):
\begin{equation} \label{Ton1}
\begin{cases}
\begin{aligned}
&\frac{\partial u }{\partial t}=\Delta u +\gamma \Big(a-u-\frac{\rho uv}{1+u+ku^2}\Big)    & \hspace{0.5cm} \text { in } \Omega \times (0,\infty),\\
&\frac{\partial v }{\partial t}  =\alpha \Delta v + \gamma \Big\{\beta(b-v)-\frac{\rho uv}{1+u+ku^2}\Big\}  &\hspace{0.4cm} \text { in } \Omega \times (0,\infty)
\end{aligned}
\end{cases}
\end{equation}
in a bounded domain $\Omega \subset \mathbb R^d \,(d=1,2,3\dots)$.
Here, $u(t)$ and $ v(t)$ denote the concentration of the activator and inhibitor at time $t$, respectively. These concentrations are supplied at constant rates $\gamma a$ and $\gamma\beta b$, and are degrade linearly proportional to themselves. Furthermore, both are used up in the reaction at a rate $f(u,v)=\frac{\gamma \rho uv}{1+u+ku^2}.$ The form of $f(u,v)$ exhibits substrate inhibition, where $k$ is a measure of the severity of the inhibition. The constant $\alpha>1$ is the ratio of diffusion coefficient. The constant $\gamma$ is a measure of the domain size, which can have any of the following interpretations:
\begin{itemize}
\item 
[(i)] $\gamma^{1/2}$ is proportional to the linear size of the spatial domain in one-dimension.
\item [(ii)] $\gamma$ represents the relative strength of the reaction terms. This means, for example, that an increase in $\gamma$ may represent an increase in activity of some rate-limiting step in the reaction sequence.
\item [(iii)] An increase in $\gamma$ can also be thought of as equivalent to a decrease in the diffusion coefficient ratio $\alpha$.
\end{itemize}

The system \eqref{Ton1} is coupled with a Neumann boundary condition:
\begin{equation} \label{Ton2}
\begin{cases}
\begin{aligned}
&\frac{\partial u}{\partial \mathbf {n}}=\frac{\partial v}{\partial \mathbf {n}}=0 
   &\hspace{1cm}  \text {   on } \partial \Omega \times (0,\infty),\\
& u(0,x)=u_0\geq 0, v(0,x)=v_0\geq 0   &\hspace{1cm} \text { in } \Omega,
\end{aligned}
\end{cases}
\end{equation}
where  $ \mathbf {n}$  denotes the  exterior normal to the boundary $ \partial \Omega. $

The system \eqref{Ton1} is a special form of the activator-inhibitor system, which is general given by a reaction diffusion system
\begin{equation} \label{Ton3}
\begin{cases}
\begin{aligned}
&\frac{\partial u }{\partial t}=\alpha_1 \Delta u +\gamma f(u,v)    & \hspace{1cm} \text { in } \Omega \times (0,\infty),\\
&\frac{\partial v }{\partial t}  =\alpha_2 \Delta v + \gamma g(u,v)  &\hspace{1cm} \text { in } \Omega \times (0,\infty).
\end{aligned}
\end{cases}
\end{equation}
By the use of this model, one can obtain many pattern formations in biological, physical, or chemical systems. Gierer-Meinhardt (\cite{Gierer-Meinhardt}) and Meinhardt (\cite{Meinhardt})  presented the functions
$$f=\nu_1-\nu_2 u+\frac{\nu_3 u^2}{v},$$
$$g=\nu_4 u^2-\nu_5v$$
for a model of biological pattern formation.  It is then called the Gierer-Meinhardt system. Global solutions to the system is then shown by Rothe (\cite{Rothe}). Existence of attractors and the Turing instability of the  system are given by Yagi (\cite{yagi}).

Masuda-Takahashi (\cite{Masuda-Takahashi}) then introduced  a generalized Gierer-Meinhardt system, i.e. the system \eqref{Ton3} with
$$f=\nu_1-\nu_2 u+\frac{\nu_3 u^p}{v^q},$$
$$g= \frac{\nu_4u^r}{v^s}-\nu_5v.$$
The authors then proved global existence of solutions in some special cases of coefficients.  Li-Chen-Qin (\cite{Li-Chen-Qin}) and then Jiang (\cite{Jiang}) showed global existence of solutions in some other cases. 

For the system \eqref{Ton1}, the nonlinearities came from the use of the Thomas system nonlinearities (\cite{Thomas}).  Murray reproduced animal coat patterns from this model, and gave a heuristic explanation for the formation of these patterns by investigating the linearization of the system \eqref{Ton1} at the unstable homogeneous stationary solution (\cite{Murray2003}). He concluded that these patterns depend only on the eigenfunction corresponding to the largest eigenvalue of the linearization. Sander-Wanner  then showed that the mechanism of these patterns is the same as that for spinodal decomposition in the Cahn-Hilliard equation (\cite{Cahn-Hilliard,Sander-Wanner}). This leads to a conclusion, which is in contrast to the Murray's explanation that
  the patterns in spaces of dimension less than three can be explained by linear behaviour corresponding to a whole range of largest eigenvalues.

The above interesting results on the system \eqref{Ton1} are based on an assumption: the system is well-posed. Not like to the Gierer-Meinhardt systems, global existence  as well as existence of an attractor to \eqref{Ton1}  have not been investigated. 

In this paper, we show that the system \eqref{Ton1} coupled with \eqref{Ton2} is well-posed. In other words, we prove existence and uniqueness of global positive {\it strong} solutions  to \eqref{Ton1}, and show that the solution's behavior changes continuously with initial conditions. For this, we use semigroup methods. We then construct a dynamical system for the model. Furthermore, by using the theory of dynamical systems (see Theorems \ref{THM1} and  \ref{THM2}) presented by Yagi (\cite{yagi}), we show that the dynamical system enjoys  exponential attractors whose fractal dimensions can be estimated. 

The paper is organized as follows. Section \ref{Preliminary} is preliminary. Some basic concepts such as sectorial operators, analytical semigroups, dynamical systems, and attractors are reviewed. 
In Section \ref{Abstract formulation}, we formulate the system \eqref{Ton1} into an abstract form, and recall the definition of mild and strong solutions to the form. 
A sufficient condition for existence of strong solutions to the abstract equation is presented.
In Section \ref{local solutions}, we construct local strong solutions, and prove the nonnegativity of these solutions. Section \ref{global solutions} shows that  the local strong solutions constructed in Section \ref{local solutions} are global by using a {\it priori} estimate for solutions.  Section \ref{initial data} provides the regular dependence of solutions on initial data. This helps us construct a continuous dynamical system in Section \ref{Dynamical system}.  Existence of exponential attractors is also shown in that section. The paper ends with a numerical example in Section \ref{example}.

\section{Preliminary}  \label{Preliminary}
Let $E$ be a Banach space with norm $\|\cdot\|$.  Let us review concepts of analytical semigroups generated by sectorial operators in $E$, and of dynamical systems on $E$. For more details, see, e.g.,  \cite{yagi}.

Throughout this paper, Banach and Hilbert spaces are always defined over the complex field $\mathbb C$.

\subsection{Sectorial operators and analytical semigroups}

A densely defined, closed linear operator $A$ in  $E$ is said to be sectorial if it satisfies the condition:
\begin{itemize}
  \item [(\rm{H})] The spectrum $\sigma(A)$ of $A$ is  contained in an open sectorial domain $\Sigma_{\varpi}$: 
\begin{equation*} \label{H1} 
\sigma(A) \subset  \Sigma_{\varpi}=\{\lambda \in \mathbb C: |\arg \lambda|<\varpi\}, \quad \quad 0<\varpi<\frac{\pi}{2}.
       \end{equation*}
  The resolvent of $A$ satisfies the estimate  
\begin{equation*} \label{H2}
          \|(\lambda-A)^{-1}\| \leq \frac{M_{\varpi}}{|\lambda|}, \quad\quad\quad \quad   \lambda \notin \Sigma_{\varpi}
     \end{equation*}
     with some  constant $M_{\varpi}>0$ depending only on the angle $\varpi$.
     \end{itemize}

Let $A$ be a  sectorial operator. The fractional powers $A^\theta, -\infty<\theta<\infty,$ are then defined as follows. For each complex number $z$ such that ${\rm Re}\, z>0$, $A^{-z}$ is defined by using the Dunford integral in $\mathcal L(E)$:
$$A^{-z}=\frac{1}{2\pi i} \int_\gamma \lambda^{-z} (\lambda-A)^{-1} d\lambda.$$
Here, $\gamma=\gamma_-\cup \gamma_0 \cup \gamma_+$ is an integral contour surrounding the spectrum $\sigma(A)$ counterclockwise in the domain $\mathbb C\setminus (-\infty,0] \cap \mathbb C\setminus\sigma(A)$ of the complex plane, where 
\begin{align*}
\gamma_\pm \colon \lambda=\rho e^{\pm i \varpi}, \hspace{2cm} \frac{\|A^{-1}\|^{-1}}{2} \leq \rho<\infty,
\end{align*}
and 
\begin{align*}
\gamma_0 \colon \lambda=\frac{\|A^{-1}\|^{-1}}{2} e^{ i \varphi}, \hspace{1cm}  -\varpi \leq \varphi\leq \varpi.
\end{align*}
It is known that $A^{-z}$ is one to one for ${\rm Re}\, z>0$. The following definition is thus meaningful:
$$A^z=(A^{-z})^{-1} \hspace{1cm} \text{ for } {\rm Re}\, z>0.$$
In addition, it is natural to define $A^0=I$, the identity mapping on $E$.  
In this way, for every real number $-\infty<\theta<\infty,$ $A^\theta$ has been defined. 

The following lemma shows useful estimates for fractional powers and the semigroup generated by a sectorial operator.
\begin{lemma}   
Let {\rm (H)}   be satisfied. Then,
\begin{itemize}
\item  [\rm (i)] $(-A)$ generates an analytical  semigroup $\{e^{-tA}, t\geq 0\}.$
\item  [\rm (ii)] For $0\leq \theta  <\infty,$
\begin{equation} \label{Ton4}
\|A^\theta  e^{-tA} \| \leq \iota_\theta t^{-\theta}, \hspace{2cm} 0<t<\infty,   
\end{equation}
 where $ \iota_\theta=\sup_{0\leq  t<\infty} t^\theta \|A^\theta  e^{-tA} \|<\infty$. In particular, there exists $0<\nu<\infty$ such that
\begin{equation} \label{Ton5}
\| e^{-tA} \| \leq \iota_0 e^{-\nu t}\leq \iota_0, \hspace{2cm}  0\leq t<\infty.
\end{equation}
 \item  [\rm (iii)]  For  $0<\theta\leq 1,$ 
\begin{equation} \label{Ton6}
\|[ e^{-tA} -I]A^{-\theta}\|\leq \frac{\iota_{1-\theta}}{\theta} t^\theta, \hspace{2cm}  0\leq t<\infty.
\end{equation}
\end{itemize}
\end{lemma}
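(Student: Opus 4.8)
The plan is to establish the three assertions in the order stated, treating (i) as the foundational construction, deriving the estimates of (ii) from the Dunford integral representation of the semigroup, and then obtaining (iii) almost for free from (ii). For (i), I would define the semigroup directly by
\[
e^{-tA}=\frac{1}{2\pi i}\int_{\Gamma} e^{-\lambda t}(\lambda-A)^{-1}\,d\lambda,\qquad t>0,
\]
where $\Gamma$ has the same shape as the contour used for the fractional powers: two rays $\lambda=\rho e^{\pm i\varpi'}$ with $\varpi<\varpi'<\frac{\pi}{2}$ joined by a small arc, oriented so as to surround $\sigma(A)$. Since $\varpi'<\frac{\pi}{2}$, along the rays one has $\operatorname{Re}\lambda=\rho\cos\varpi'>0$, so $|e^{-\lambda t}|=e^{-t\rho\cos\varpi'}$ together with the resolvent bound $\|(\lambda-A)^{-1}\|\le M_\varpi/|\lambda|$ makes the integral absolutely convergent for every $t>0$. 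I would then verify the semigroup law $e^{-(s+t)A}=e^{-sA}e^{-tA}$ by Fubini's theorem and the resolvent identity, the analyticity of $t\mapsto e^{-tA}$ by differentiating under the integral sign (the extra factor $\lambda$ being absorbed by the exponential decay), and strong continuity $e^{-tA}x\to x$ as $t\to0^+$. This is the classical theorem that a sectorial operator generates an analytic semigroup, so in the paper it may simply be cited.

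For (ii), I would use the smoothing property of analytic semigroups: for $t>0$ the range of $e^{-tA}$ lies in $D(A^\theta)$ for every $\theta\ge0$, and $A^\theta$ may be brought under the integral to give $A^\theta e^{-tA}=\frac{1}{2\pi i}\int_{\Gamma}\lambda^\theta e^{-\lambda t}(\lambda-A)^{-1}\,d\lambda$. Taking norms and inserting the resolvent estimate yields
\[
\|A^\theta e^{-tA}\|\le \frac{M_\varpi}{2\pi}\int_{\Gamma}|\lambda|^{\theta-1}e^{-t\operatorname{Re}\lambda}\,|d\lambda|,
\]
and the substitution $s=t|\lambda|$ along each ray extracts exactly the factor $t^{-\theta}$, the remaining integral $\int_0^\infty s^{\theta-1}e^{-cs}\,ds$ being finite for $\theta>0$. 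This shows $\sup_{t>0}t^\theta\|A^\theta e^{-tA}\|<\infty$ and in particular that $\iota_\theta$ is finite. To obtain finiteness of the supremum over the whole half-line (and the exponential decay asserted for $\theta=0$), I would record that $A$ is invertible — the very definition of the fractional powers uses $A^{-1}$ — so that $\sigma(A)\subset\Sigma_\varpi$ with $\varpi<\frac{\pi}{2}$ forces $\operatorname{Re}\lambda\ge\nu$ for some $\nu>0$ on the spectrum; routing $\Gamma$ to the right of the line $\operatorname{Re}\lambda=\nu$ then gives $\|e^{-tA}\|\le \iota_0 e^{-\nu t}$ and makes $t^\theta\|A^\theta e^{-tA}\|$ bounded for large $t$ as well.

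Assertion (iii) then follows from (ii) with essentially no further work. Since $A^{-\theta}\in\mathcal L(E)$ and $\frac{d}{ds}e^{-sA}=-Ae^{-sA}$ for $s>0$, the fundamental theorem of calculus (applied on $[\varepsilon,t]$ and letting $\varepsilon\to0^+$) gives
\[
[e^{-tA}-I]A^{-\theta}=-\int_0^t Ae^{-sA}A^{-\theta}\,ds=-\int_0^t A^{1-\theta}e^{-sA}\,ds,
\]
where the integral converges at $s=0$ because, by (ii), $\|A^{1-\theta}e^{-sA}\|\le \iota_{1-\theta}\,s^{\theta-1}$ is integrable for $0<\theta\le1$. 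Taking norms and integrating yields $\|[e^{-tA}-I]A^{-\theta}\|\le \iota_{1-\theta}\int_0^t s^{\theta-1}\,ds=\frac{\iota_{1-\theta}}{\theta}t^\theta$, which is exactly the claimed bound.

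I expect the main obstacle to lie in part (i) together with securing the uniform-in-$t$ control in (ii): the contour estimates must be arranged so that the bound $t^{-\theta}$ holds on the entire half-line, which requires using the small-$t$ behaviour (the sector angle $\varpi'<\frac{\pi}{2}$, giving convergence) and the large-$t$ behaviour (invertibility of $A$, giving the spectral gap and exponential decay) simultaneously. Once (ii) is established in this uniform form, (iii) is immediate.
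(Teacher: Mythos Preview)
Your outline is correct and follows the standard route (Dunford contour integral for (i)--(ii), then the identity $[e^{-tA}-I]A^{-\theta}=-\int_0^t A^{1-\theta}e^{-sA}\,ds$ for (iii)); this is exactly the argument one finds in textbooks on analytic semigroups. The paper itself does not prove this lemma at all but simply cites Yagi's monograph, so there is nothing to compare beyond noting that your sketch is the proof one would find in that reference.
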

For the proof, see \cite{yagi}.

\subsection{Dynamical systems}
A family of nonlinear operators $S(t), 0\leq t<\infty$, from a subset $E_0$ of $E$  into $E_0$ is called a semigroup on $E_0$ if the family satisfies
\begin{itemize}
 \item [{ \rm (i)}] $S(0)=I$.
 \item  [{ \rm (ii)}] $S(t)S(s)=S(t+s), \hspace{1cm} 0\leq s, t<\infty$.
  \end{itemize}

In addition to these properties, if the operators $S(t)$ satisfy
\begin{itemize}
 \item [{ \rm (iii)}] $S(\cdot)(\cdot)$ is continuous from $[0,\infty)\times E_0$ to $E_0$
 \end{itemize}
then the family is called a continuous semigroup on $E_0$.

Let $S$ be a continuous semigroup on $E_0$. For each $x\in E_0$, the trace of $E_0$\,{-}\,valued continuous function $S(\cdot) x$ in $E_0$ is called a trajectory starting from $x$. A dynamical system is the set of trajectories starting from the points in $E_0$, and is denoted by  a triple $(S, E_0, E).$

A set $\mathcal A\subset E_0$ is called an absorbing set if $\mathcal A$ absorbs every bounded set of $E_0$, i.e. for any bounded set $B$ of $E_0$, there exists $0\leq t_B<\infty$  such that 
$$S(t) B \subset \mathcal A, \hspace{1cm} t_B\leq t<\infty.$$

A set $\mathcal A\subset E_0$ is called an attractor if it satisfies two conditions: 
 \begin{itemize}
 \item [{ \rm (i)}] $\mathcal A$ is an invariant set of the dynamical system, i.e. 
    $$S(t)\mathcal A \subset \mathcal A, \hspace{1cm} 0\leq t<\infty.$$
 \item  [{ \rm (ii)}] For some neighborhood $W$ of $\mathcal A$,
   $$\lim_{t\to \infty} h(S(t)W,\mathcal A)=0.$$ 
 Here, $h(\cdot,\cdot)$ is the Hausdorff pseudo-distance, i.e. for two subsets $B_1$ and $B_2$ of $E$, 
$$h(B_1, B_2)=\sup_{x\in B_1} \inf_{y\in B_2} \|x-y\|.$$
 \end{itemize}

An attractor $\mathcal A$ is called a global attractor of the dynamical system $(S, E_0, E)$ if it satisfies three conditions: 
 \begin{itemize}
 \item [{ \rm (i)}] $\mathcal A$ is a compact set of $E.$
 \item  [{ \rm (ii)}] $\mathcal A$ is a strictly  invariant set of the dynamical system, i.e. 
    $$S(t)\mathcal A = \mathcal A, \hspace{1cm} 0\leq t<\infty.$$
 \item  [{ \rm (ii)}] $\lim_{t\to \infty} h(S(t)B,\mathcal A)=0$ for every bounded set $B$ of $E_0$.
 \end{itemize}

Let us finally review the concept of exponential attractors.  A compact attractor  $ \mathcal A^*\subset E_0$  is called an exponential attractor if it satisfies the following conditions:
\begin{itemize}
  \item [{ \rm (i)}]  $\mathcal A^*$ contains a global attractor $\mathcal A$ of the dynamical system $(S, E_0, E).$
  \item [{ \rm (ii)}] $\mathcal A^*$ has finite fractal dimension. Here, the fractal dimension of $\mathcal A^*$ is defined by
$$d_F(\mathcal A^*)=-\limsup_{\epsilon \to 0} \frac{\log N(\epsilon)}{\log \epsilon},$$
where $N(\epsilon)$ is the minimal number of balls with radius $\epsilon$ which cover the set $\mathcal A^*.$
  \item [{ \rm (iii)}] There exists $0<\alpha<\infty$ such that for all bounded set $B\subset E_0$, 
$$h(S(t)B, \mathcal A^*) \leq C_B e^{-\alpha t},\hspace{1cm} 0\leq t<\infty$$
with some $0<C_B<\infty.$
  \end{itemize}

\begin{remark}
It is obvious that every dynamical system enjoys  at most one global attractor. However, an exponential attractor, if it exists, is not unique in general. Exponential attractors exist as a family. 
\end{remark}

 The following theorem shows a sufficient condition for existence of exponential attractors for a dynamical system.

\begin{theorem}   \label{THM1}
Let $(S,E_0,E)$ be a dynamical system in a Banach space $E$, where the phase space $E_0$ is a closed bounded set of $E$. Assume that there exist an operator $K$, two constants $0<t_0<\infty$ and $0\leq \delta<\frac{1}{2}$ satisfying the following conditions:
\begin{itemize}
  \item [{ \rm (i)}] $K\colon E_0\to E_1$, where $E_1$ is a Banach space which is compactly embedded in $E$. In addition, $K$ satisfies a Lipschitz condition in the sense
  $$\|K(x)-K(y)\|_{E_1} \leq L\|x-y\|, \hspace{1cm} x,y\in E_0$$
  with some $L>0.$  
  \item [{ \rm (ii)}]$ S(t_0)$ is a compact perturbation of contraction operator  in the sense
    $$\|S(t_0)(x)-S(t_0)(y)\| \leq \delta\|x-y\| +\|K(x)-K(y)\|, \hspace{1cm} x,y\in E_0.$$
  \item [{ \rm (iii)}] $S(\cdot)(\cdot) $ satisfies a Lipschitz condition on $[0,t_0] \times E_0$ in the sense
   $$\|S(t)(x)-S(s)(y)\| \leq L_1(|t-s| + \|x-y\|), \hspace{1cm} 0\leq t,s\leq t_0, x,y\in E_0$$
  with some $L_1>0.$
 \end{itemize} 
Then, for any $0<\theta<\frac{1-2\delta}{2L}$, there exists an exponential attractor $\mathcal A_\theta$ for $(S,E_0,E)$ such that
$$h(S(t)E_0,\mathcal A_\theta) \leq \frac{R}{2(\delta+\theta L)} e^{\frac{t\log [2(\delta+\theta L)] }{t_0}}, \hspace{ 1cm} 0<t <\infty.$$
Here, $R$ is the diameter of $E_0$.
\end{theorem}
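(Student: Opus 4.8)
The plan is to pass to the discrete dynamical system generated by $S^*:=S(t_0)$, build an exponential attractor for the iterates $\{(S^*)^n\}_{n\geq0}$, and then thicken it over one step $[0,t_0]$ to produce $\mathcal{A}_\theta$. Write $B(z,r)$ for the ball in $E$ of radius $r$ about $z$, fix $\theta\in\big(0,\frac{1-2\delta}{2L}\big)$, and set $\kappa:=2(\delta+\theta L)$, so that $0<\kappa<1$ precisely because $\delta<\frac12$. Let $N_\theta<\infty$ be the number of $E$-balls of radius $\theta$ covering the unit ball of $E_1$, which is finite by the compact embedding $E_1\hookrightarrow E$ in (i). The heart of the matter is the squeezing lemma: for every $x\in E_0$ and $r>0$, the image $S^*\big(B(x,r)\cap E_0\big)$ is covered by $N_\theta$ balls of radius $\kappa r$. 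Indeed, by the Lipschitz bound in (i) the set $K\big(B(x,r)\cap E_0\big)$ lies in the $E_1$-ball of radius $Lr$ about $K(x)$, which by scaling is covered by $N_\theta$ $E$-balls of radius $\theta Lr$; partitioning the points of $B(x,r)\cap E_0$ according to which of these balls contains their $K$-image, any two points $y,y'$ of one block satisfy $\|K(y)-K(y')\|\leq2\theta Lr$ and $\|y-y'\|\leq2r$, whence (ii) gives
$$\|S^*(y)-S^*(y')\|\leq\delta\|y-y'\|+\|K(y)-K(y')\|\leq2(\delta+\theta L)r=\kappa r,$$
so the $S^*$-image of each block has diameter at most $\kappa r$ and sits in one ball of radius $\kappa r$.

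Second, I would iterate this lemma. As $E_0$ has diameter $R$ it lies in a single ball $B(x_0,R)$ with $x_0\in E_0$, and applying the lemma $n$ times (taking the new centres among the image points, hence in $E_0$) covers $(S^*)^n(E_0)$ by $N_\theta^{\,n}$ balls of radius $R\kappa^n$, whose centres I gather into a finite set $\mathcal{E}_n\subset E_0$. The bound in (ii) together with the compactness of $E_1\hookrightarrow E$ also makes $S^*$ a strict contraction for the Kuratowski measure of noncompactness $\chi$, namely $\chi(S^*B)\leq\delta\,\chi(B)$; since $\{(S^*)^nE_0\}$ decreases with $\chi\big((S^*)^nE_0\big)\leq\delta^n\chi(E_0)\to0$ and $E_0$ is closed, $\mathcal{A}=\bigcap_{n\geq0}\overline{(S^*)^nE_0}$ is a nonempty compact strictly invariant global attractor. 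I then define the discrete exponential attractor
$$\mathcal{M}^*:=\overline{\bigcup_{k\geq0}(S^*)^k\Big(\mathcal{A}\cup\overline{\textstyle\bigcup_{n\geq0}\mathcal{E}_n}\Big)},$$
in which the forward iterates enforce $S^*(\mathcal{M}^*)\subseteq\mathcal{M}^*$, while the covers $\mathcal{E}_n$ yield $h\big((S^*)^nE_0,\mathcal{M}^*\big)\leq R\kappa^n$ and $\mathcal{A}\subseteq\mathcal{M}^*$. The dimension is read off the construction: for $\epsilon\in(R\kappa^{n+1},R\kappa^n]$ the level-$n$ cover uses $N_\theta^{\,n}$ balls, so $d_F(\mathcal{M}^*)\leq\log N_\theta/\log(1/\kappa)<\infty$.

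Finally, I would lift the discrete attractor to continuous time by setting $\mathcal{A}_\theta:=\overline{\bigcup_{0\leq s\leq t_0}S(s)\,\mathcal{M}^*}$. Compactness holds because $S(\cdot)(\cdot)$ is continuous on the compact set $[0,t_0]\times\mathcal{M}^*$; the time-Lipschitz bound (iii) makes $(s,w)\mapsto S(s)w$ Lipschitz, so $d_F(\mathcal{A}_\theta)\leq d_F(\mathcal{M}^*)+1<\infty$, and the semigroup law with the invariance of $\mathcal{M}^*$ gives $S(\tau)\mathcal{A}_\theta\subseteq\mathcal{A}_\theta$ and $\mathcal{A}\subseteq\mathcal{M}^*\subseteq\mathcal{A}_\theta$. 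For the decay estimate the key point is that, writing $t=nt_0+s$ with $0\leq s<t_0$, one has $S(t)E_0=(S^*)^n\big(S(s)E_0\big)\subseteq(S^*)^nE_0$ because $S(s)E_0\subseteq E_0$; hence, using $\mathcal{M}^*\subseteq\mathcal{A}_\theta$,
$$h\big(S(t)E_0,\mathcal{A}_\theta\big)\leq h\big((S^*)^nE_0,\mathcal{M}^*\big)\leq R\kappa^n\leq\frac{R}{\kappa}\,\kappa^{t/t_0}=\frac{R}{2(\delta+\theta L)}\,e^{\frac{t\log[2(\delta+\theta L)]}{t_0}},$$
which is exactly the asserted bound. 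The step I expect to be the main obstacle is not the squeezing lemma, which is a scaling argument, but controlling $d_F$ for the full set $\mathcal{M}^*$: one must check that the entropy estimate survives the two closures and the countable union of Lipschitz images, which hinges on showing that the iterates $(S^*)^k$ of the finite nets collapse toward $\mathcal{A}$ fast enough that the per-scale ball count does not accumulate beyond $N_\theta^{\,n}$.
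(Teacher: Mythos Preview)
The paper does not prove this theorem: immediately after the statement it writes ``For the proof, see \cite{yagi}'' and moves on. So there is no in-paper argument to compare against; the result is imported wholesale from Yagi's monograph.

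Your outline is essentially the standard construction that appears in that reference (and earlier in the Eden--Foias--Nicolaenko--Temam and Efendiev--Miranville--Zelik literature): pass to the discrete map $S^*=S(t_0)$, use the compact embedding $E_1\hookrightarrow E$ to cover the $E_1$-image of each $r$-ball by $N_\theta$ balls of radius $\theta Lr$, combine with (ii) to get the ``squeezing'' step $r\mapsto 2(\delta+\theta L)r$, iterate to obtain finite nets $\mathcal{E}_n$ and the decay $h((S^*)^nE_0,\mathcal{E}_n)\leq R\kappa^n$, assemble a discrete attractor containing the global attractor, and finally thicken over $[0,t_0]$ using the Lipschitz condition (iii). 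Your computation of the decay constant $\kappa=2(\delta+\theta L)$ and of the continuous-time bound via $t=nt_0+s$ is exactly how the quantitative statement with the precise constant $R/\kappa$ arises.

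You have also correctly located the one genuinely delicate point. Your proposed discrete attractor $\mathcal{M}^*=\overline{\bigcup_{k\geq0}(S^*)^k\big(\mathcal{A}\cup\overline{\bigcup_{n\geq0}\mathcal{E}_n}\big)}$ is invariant and attracts at the right rate, and its compactness follows since the iterates accumulate on $\mathcal{A}$; but the naive entropy count $d_F(\mathcal{M}^*)\leq\log N_\theta/\log(1/\kappa)$ does not follow directly from the level-$n$ covers alone, because at scale $R\kappa^n$ you must also cover all the deeper nets $\mathcal{E}_m$, $m>n$, and their forward iterates. The cure used in the cited proof is to organise the nets hierarchically (each $\mathcal{E}_{n+1}$ is obtained by refining the balls around $\mathcal{E}_n$), so that for each scale the total number of centres needed is controlled by a geometric series in $N_\theta$, and the Lipschitz bound on $S^*$ from (iii) prevents the forward iterates from inflating the count. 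With that bookkeeping your outline becomes a complete proof.
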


For the proof, see  \cite{yagi}.

Let us now consider the case where $E$ is a Hilbert space and $E_0$ is a compact subset of $E$. We say that an operator $S_1$ from $E_0$ to $E_0$ has {\it squeezing property} if there exist a constant $0<\delta<\frac{1}{4}$ and an orthogonal operator $P$ of finite rank $N$ such that for $x, y\in E_0$, either 
$$\|S_1(x)-S_1(y)\| \leq \delta \|x-y\|$$
or
$$(I-P)(S_1(x)-S_1(y))  \leq \| P(S_1(x)-S_1(y))\|.$$

\begin{theorem} \label{THM2}
Let $(S,E_0,E)$ be a dynamical system in a Hilbert space $E$, where $E_0\subset E$ is  compact. Assume that for some fixed $0<t_0<\infty,$  the operator  $S(t_0)$  has the squeezing property with some $0<\delta<\frac{1}{4}$ and an orthogonal operator $P$ of finite rank $N.$ Assume further that $S(t)$ satisfies the Lipschitz condition {\rm (iii)} of Theorem \ref{THM1}.

Then, for any $0<\theta<1-2\delta$, there exists an exponential attractor $\mathcal A_\theta$ for $(S,E_0,E)$ such that
$$h(S(t)E_0,\mathcal A_\theta) \leq \frac{R}{2\delta+\theta } e^{\frac{t\log (2\delta+\theta) }{t_0}}, \hspace{ 1cm} 0<t <\infty.$$
In addition, the fractal dimension of $\mathcal A_\theta$ is finite and estimated by
$$d_F(A_\theta) \leq 1 + N \max\{  -\frac{  \log (\frac{3L_1}{\theta}+1)    }  {\log (2\delta+\theta)}, 1   \}.$$
Here, $R$ is the diameter of $E_0$, i.e. $R=\sup_{x, y\in E_0}\|x-y\|.$
\end{theorem}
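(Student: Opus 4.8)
The plan is to reduce everything to the single time-$t_0$ map $S_1=S(t_0)$ on the compact set $E_0$, construct an exponential attractor for this \emph{discrete} dynamical system by a covering argument fed by the squeezing property, and then reinstate the continuous time variable using the Lipschitz estimate (iii). The fractal-dimension count is cleanest at the discrete level, and the passage to continuous time is what contributes the leading $1+$ in the dimension bound.

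First I would prove a covering lemma: if $B\subset E_0$ lies in a ball of radius $r$, then $S_1(B)$ is covered by $m\le(3L_1/\theta+1)^N$ balls of radius $(2\delta+\theta)r$. By (iii) the map $S_1$ is $L_1$-Lipschitz, so the projection $\{PS_1x:x\in B\}$ lies in a ball of radius $\le L_1 r$ inside the $N$-dimensional space $PE$, and a volume count covers it by $m$ balls of radius proportional to $\theta r$. I would then group the points of $B$ according to which of these small balls contains their projection. For $x,y$ in one group, $\|P(S_1x-S_1y)\|$ is of order $\theta r$; the squeezing dichotomy then gives, in its first alternative, $\|S_1x-S_1y\|\le\delta\|x-y\|\le 2\delta r$, and in its second alternative $\|(I-P)(S_1x-S_1y)\|\le\|P(S_1x-S_1y)\|$, whence by the Pythagorean identity $\|w\|^2=\|Pw\|^2+\|(I-P)w\|^2$ (valid since $P$ is orthogonal) $\|S_1x-S_1y\|$ is again of order $\theta r$. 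Thus each group has image of diameter $\le(2\delta+\theta)r$ and sits in one ball of that radius. Since $\delta<\tfrac14$ and $\theta<1-2\delta$, the scale factor $\lambda:=2\delta+\theta$ is $<1$, so this is a genuine contraction of scale.

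Next I would iterate. Applying the lemma to $E_0$ (contained in a ball of radius $R$) and then repeatedly to the resulting pieces produces, after $n$ steps, a cover of $S_1^nE_0$ by $m^n$ balls of radius $\lambda^n R$. Accumulating these nets in the standard Eden--Foias--Nicolaenko--Temam manner yields a compact, positively $S_1$-invariant set $\mathcal M$ that contains the discrete global attractor, attracts $E_0$ at the geometric rate $\lambda^n$, and has fractal dimension at most $\dfrac{\log m}{-\log\lambda}=N\,\dfrac{\log(3L_1/\theta+1)}{-\log(2\delta+\theta)}$. Writing $n=\lfloor t/t_0\rfloor$ converts the per-step factor $\lambda=2\delta+\theta$ into the continuous exponential rate $e^{t\log(2\delta+\theta)/t_0}$ of the statement.

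Finally I would set $\mathcal A_\theta=\bigcup_{0\le s\le t_0}S(s)\mathcal M$. By (iii) this is compact, contains the continuous global attractor, and attracts $E_0$ at the same exponential rate; moreover it is the image of $[0,t_0]\times\mathcal M$ under the Lipschitz map $(s,x)\mapsto S(s)x$, and since a Lipschitz image of the product of an interval with a set of fractal dimension $\kappa$ has fractal dimension at most $1+\kappa$, one obtains $d_F(\mathcal A_\theta)\le 1+N\max\{\frac{-\log(3L_1/\theta+1)}{\log(2\delta+\theta)},1\}$, the $\max$ with $1$ covering the regime in which the one-dimensional time direction dominates the count. The hard part will be the covering lemma itself: extracting from the mere either/or of the squeezing property a \emph{single} uniform cover with the sharp contraction factor $2\delta+\theta$ and the explicit count $(3L_1/\theta+1)^N$, since the two alternatives must be reconciled group by group and the auxiliary radius balanced so that both constants come out exactly. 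As a shortcut that yields existence (though neither the sharp rate nor the explicit dimension), one can instead verify that $K=\sqrt2\,PS(t_0)$ satisfies hypotheses (i)--(ii) of Theorem~\ref{THM1} with the same $\delta$ (now $<\tfrac12$) and $L=\sqrt2 L_1$ — indeed in the second alternative $\|S_1x-S_1y\|\le\sqrt2\|P(S_1x-S_1y)\|=\|K(x)-K(y)\|$ — and then quote that theorem directly.
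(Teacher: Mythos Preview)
The paper does not prove this theorem at all: its ``proof'' is the single sentence ``For the proof, see \cite{yagi}.'' Your outline is precisely the classical Eden--Foias--Nicolaenko--Temam construction that Yagi reproduces, so in that sense you have supplied what the paper omits rather than diverged from it.

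Two small points worth flagging. First, in your covering lemma the two alternatives of the squeezing dichotomy yield, for a pair $x,y$ in the same projection-group, the bounds $\|S_1x-S_1y\|\le 2\delta r$ and $\|S_1x-S_1y\|\le\sqrt{2}\,\theta r$ respectively; different pairs in the same group may fall into different alternatives, so the group diameter is bounded by the \emph{maximum} of these, not their sum $2\delta+\theta$. Getting the clean contraction factor $2\delta+\theta$ and the exact count $(3L_1/\theta+1)^N$ requires choosing the auxiliary projection-ball radius more carefully (essentially radius $\theta r/2$ rather than $\theta r$) and a slightly different bookkeeping; you correctly identify this balancing as the hard part. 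Second, your gloss on the $\max\{\cdot,1\}$ in the dimension bound is a bit loose: the Lipschitz image of $[0,t_0]\times\mathcal M$ already gives $1+d_F(\mathcal M)$ without any $\max$, so the $\max$ with $1$ is a weakening that arises from the particular way Yagi organizes the covering count rather than from the time direction dominating. None of this affects the correctness of the strategy.
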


For the proof, see  \cite{yagi}.

\section{Abstract formulation}  \label{Abstract formulation}
Let us formulate  the system \eqref{Ton1} coupled with the condition \eqref{Ton2} as an abstract equation.

Set the underlying space
\begin{equation*} 
(E,\|\cdot\|)=(L_2(\Omega ) \times L_2(\Omega ),\|\cdot\|_{L_2 \times L_2}), 
\end{equation*}
where the norm is defined by
$\Big\|\Big(
\begin{matrix} 
x_1\\
x_2
\end{matrix}\Big)\Big\|_{L_2 \times L_2}=\sqrt{\|x_1\|_{L_2}^2+ \|x_2\|_{L_2}^2},$  $\Big(
\begin{matrix} 
x_1\\
x_2
\end{matrix}\Big)\in E.$
And set the space of initial functions
\begin{equation}  \label{Ton7}    
V=\left \{\left(
\begin{matrix} 
u_0\\
v_0
\end{matrix}\right); 0\leq u_0, v_0 \in L_2(\Omega ) \right\}.
\end{equation}

Denote by $A$ a diagonal matrix operator 
diag$\{A_1,A_2\}$ of $E$, where $A_1$ and $A_2$ are realization of operators $-\Delta+\gamma$ and $-\alpha\Delta+\gamma \beta$ in 
$L_2(\Omega )$, respectively, under the homogeneous Neumann boundary condition \eqref{Ton2} on $\partial \Omega $.
According to \cite[Theorem 1.25]{yagi}, $A_i \, (i=1,2)$ are positive define self-adjoint operators of $L_2(\Omega )$ with domain
$$\mathcal D(A_i)=H^2_N(\Omega )=\{h\in H^2(\Omega); \frac{\partial h}{\partial \mathbf {n}}=0 \text{  on  } \partial \Omega\},$$
here $H^2(\Omega)$ denotes the space of all complex-valued functions whose partial derivatives in the distribution sense up to the second order belong to $L_2(\Omega ).$
Furthermore, on account of \cite[Theorem 16.7, 16.9]{yagi}, the domain of the fractional power $A^\theta$ is defined by 
\begin{equation}  \label{Ton8}
\mathcal D(A^\theta)=
\begin{cases}
H^{2\theta}(\Omega )\times H^{2\theta}(\Omega ), \hspace{1cm} \text{if } 0\leq \theta<\frac{3}{4},\\
H^{2\theta}_N(\Omega ) \times H^{2\theta}_N(\Omega ), \hspace{1cm} \text{if } \frac{3}{4}< \theta\leq 1.
\end{cases}
\end{equation}
(Here, 
$$H^{2\theta}_N(\Omega )=\{h\in H^{2\theta}(\Omega); \frac{\partial h}{\partial \mathbf {n}}=0 \text{  on  } \partial \Omega\},$$
and 
$H^{2\theta}(\Omega)$ is  the  space of all restrictions of functions in $H^{2\theta}(\mathbb R^n)$ 
to $\Omega$. In addition,  
$$H^{2\theta}(\mathbb R^n)=\{f\in \mathcal S(\mathbb R^n)'; \mathcal F^{-1} [(1+|x|^2)^\theta \mathcal Ff] \in L_2(\mathbb R^n)\},$$
where $\mathcal S(\mathbb R^n)'$ is the the set of tempered distributions in $\mathbb R^n$, $\mathcal F^{-1} $ and $\mathcal F$ are the Fourier transform and inverse Fourier transform on $\mathcal S(\mathbb R^n)'$, respectively.)

Let us define an operator $\bar F \colon  E\to E$ by
\begin{equation*}  
\bar F(X)=\left (
\begin{matrix} 
\gamma a-\frac{\gamma \rho u|v|}{1+|u|+ku^2}\\
\gamma \beta b-\frac{\gamma \rho |u|v}{1+|u|+ku^2}
\end{matrix}\right), \quad X=\left(
\begin{matrix} 
u\\
v
\end{matrix}\right) \in E.
\end{equation*}
Since 
$$\Big| \frac{u}{1+|u|+ku^2}\Big|\leq 1,$$ 
$\bar F$ certainly maps $E$ into $E$. Furthermore, the function $\bar F$ has the following properties.

\begin{lemma}   \label{THM3}
There exists $0<c_\infty<\infty$ such that
\begin{equation*}
\|\bar F(x)-\bar F(y)\| \leq c_\infty(1+\|x\|_{\infty}+\|y\|_{\infty})\|x-y\|, \hspace{1cm} x, y\in E,
\end{equation*}
and 
\begin{equation*}
\|\bar F(x)\| \leq c_\infty (1+\|x\|), \hspace{1cm} x\in E.
\end{equation*}
Here, $\|\cdot\|_{\infty}$ is defined by 
$$\|x\|_{\infty}=\max\{\|x_1\|_{L_\infty}, \|x_2\|_{L_\infty}\}, \quad 
 x=\left(
\begin{matrix} 
x_1\\
x_2
\end{matrix}\right) \in E.$$
\end{lemma}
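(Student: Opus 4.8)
The plan is to reduce both vector estimates to pointwise estimates for the two scalar nonlinearities
$\phi(r)=\frac{r}{1+|r|+kr^2}$ and $\psi(r)=\frac{|r|}{1+|r|+kr^2}$, $r\in\mathbb R$, since the nonlinear part of the first component of $\bar F$ equals $\gamma\rho\,\phi(u)|v|$ and that of the second equals $\gamma\rho\,\psi(u)v$. First I would record two elementary facts about $\phi$ and $\psi$: the uniform bound $|\phi(r)|,|\psi(r)|\leq 1$ for all $r$ (the first is the estimate already noted in the excerpt, the second being immediate since $|r|\leq 1+|r|+kr^2$), and a global Lipschitz bound $|\phi(r)-\phi(s)|\leq L|r-s|$, $|\psi(r)-\psi(s)|\leq L|r-s|$. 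The Lipschitz bound I would obtain by differentiating on each half-line $r>0$ and $r<0$, where both functions are smooth, computing $\phi'(r)=\frac{1-kr^2}{(1+|r|+kr^2)^2}$ and bounding $|\phi'(r)|\leq\frac{1+kr^2}{(1+kr^2)^2}\leq 1$ (and the analogous computation for $\psi$); continuity of $\phi,\psi$ across $r=0$ then upgrades these to global Lipschitz bounds with $L=1$. This scalar step, in particular the corner of $\psi$ at the origin, is the only place where real care is needed, and it is the main obstacle; once it is in hand the vector estimates are routine.

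For the growth bound I would use $|\phi|,|\psi|\leq 1$ pointwise to get $|\gamma a-\gamma\rho\,\phi(u)|v||\leq\gamma a+\gamma\rho|v|$ almost everywhere, hence $\|\gamma a-\gamma\rho\,\phi(u)|v|\|_{L_2}\leq\gamma a\,|\Omega|^{1/2}+\gamma\rho\|v\|_{L_2}$, and similarly for the second component with $\gamma\beta b$ in place of $\gamma a$. Taking the Euclidean combination of the two components and absorbing the fixed constants $\gamma a,\gamma\beta b,\gamma\rho,|\Omega|^{1/2}$ into $c_\infty$ yields $\|\bar F(x)\|\leq c_\infty(1+\|x\|)$, since $\|v\|_{L_2}\leq\|x\|$.

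Finally, for the Lipschitz-type bound I would write $x=\binom{u_1}{v_1}$, $y=\binom{u_2}{v_2}$; the constants $\gamma a,\gamma\beta b$ cancel in $\bar F(x)-\bar F(y)$. For the first component I would split
$$\phi(u_1)|v_1|-\phi(u_2)|v_2|=\phi(u_1)\bigl(|v_1|-|v_2|\bigr)+\bigl(\phi(u_1)-\phi(u_2)\bigr)|v_2|,$$
bounding the first summand in $L_2$ by $\bigl\||v_1|-|v_2|\bigr\|_{L_2}\leq\|v_1-v_2\|_{L_2}$ (using $|\phi|\leq 1$ and the reverse triangle inequality) and the second by $L\|v_2\|_{L_\infty}\|u_1-u_2\|_{L_2}$ (using the scalar Lipschitz bound together with Hölder's inequality for the $L_\infty\cdot L_2$ product). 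The second component is handled identically, with $\psi$ in place of $\phi$ and $v$ in place of $|v|$. Combining the two components and using $\|v_2\|_{L_\infty}\leq\|y\|_\infty$, $\|u_1-u_2\|_{L_2}\leq\|x-y\|$, and $\|v_1-v_2\|_{L_2}\leq\|x-y\|$ gives a bound of the form $c_\infty(1+\|y\|_\infty)\|x-y\|$, which is dominated by $c_\infty(1+\|x\|_\infty+\|y\|_\infty)\|x-y\|$, as claimed. Thus, modulo the elementary scalar analysis of $\phi$ and $\psi$, the whole lemma follows from two product-splittings and Hölder's inequality.
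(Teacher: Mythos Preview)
Your proof is correct and takes a genuinely different route from the paper's. The paper computes the difference of the two fractions over a common denominator and then splits the resulting numerator into three pieces $\bar F_{11},\bar F_{12},\bar F_{13}$ (coming from the constant, linear, and quadratic terms of $1+|u|+ku^2$), each of which is estimated by hand using H\"older-type inequalities; this is straightforward but somewhat laborious. You instead isolate the scalar building blocks $\phi(r)=r/(1+|r|+kr^2)$ and $\psi(r)=|r|/(1+|r|+kr^2)$, prove once that both are bounded by $1$ and globally Lipschitz with constant $1$ (via the derivative bound $|\phi'|\le (1+kr^2)/(1+|r|+kr^2)^2\le 1$ and the observation $\psi(r)=\phi(|r|)$, which cleanly handles the corner at $0$), and then reduce the vector Lipschitz estimate to a single product splitting $\phi(u_1)|v_1|-\phi(u_2)|v_2|=\phi(u_1)(|v_1|-|v_2|)+(\phi(u_1)-\phi(u_2))|v_2|$. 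Your approach is shorter, more conceptual, and yields the slightly sharper intermediate bound $c_\infty(1+\|y\|_\infty)\|x-y\|$ (only one of the two $L_\infty$ norms is actually needed), which you then relax to the symmetric form. Both routes give the same growth bound by the same argument.
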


\begin{proof}
Let $x=\left(
\begin{matrix} 
u_1\\
v_1
\end{matrix}\right)$
 and 
$y=\left(
\begin{matrix} 
u_2\\
v_2
\end{matrix}\right)$ in $E$. 
Then,
$$\bar F(x)-\bar F(y)=\gamma \rho \left(
\begin{matrix} 
\frac{u_2|v_2|}{1+|u_2|+ku_2^2}-\frac{ u_1|v_1|}{1+|u_1|+ku_1^2}\\
\frac{ |u_2|v_2}{1+|u_2|+ku_2^2}-\frac{ |u_1|v_1}{1+|u_1|+ku_1^2}
\end{matrix}\right).
$$

We consider the norm in $L_2$ of the first component in the latter parenthesis. We have
\begin{align*}
&\left\|\frac{ u_2|v_2|}{1+|u_2|+ku_2^2}-\frac{ u_1|v_1|}{1+|u_1|+ku_1^2}\right\|_{L_2} \\
\leq & \left\|\frac{ u_2|v_2|  - u_1|v_1|}  {(1+|u_2|+ku_2^2)(1+|u_1|+ku_1^2)}
\right\|_{L_2}    \\
&+\left\|\frac{ |u_1| u_2|v_2|-u_1|v_1||u_2|}  {(1+|u_2|+ku_2^2)(1+|u_1|+ku_1^2)}
\right\|_{L_2}  \\
&+\left\|\frac{ k(u_1^2u_2 |v_2|-u_1 |v_1| u_2^2)}  {(1+|u_2|+ku_2^2)(1+|u_1|+ku_1^2)}
\right\|_{L_2} \\
=& \bar F_{11}+\bar F_{12}+\bar F_{13}.
\end{align*}

The first term, $\bar F_{11}$, can be estimated easily:
\begin{align*}
\bar F_{11} \leq & \|u_2|v_2|  - u_1|v_1|\|_{L_2}  \\
= & \|(u_2-u_1) |v_2|  + u_1(|v_2|-|v_1|)\|_{L_2}  \\
\leq & \|u_2-u_1\|_{L_2} \|v_2\|_{L_\infty}  + \|u_1\|_{L_\infty}\|v_2-v_1\|_{L_2}  \\
\leq & \|x-y\| (\|x\|_{\infty}  + \|y\|_{\infty}).
\end{align*}

For the second term, $\bar F_{12}$,  we have
\begin{align*}
\bar F_{12} = & \left\|\frac{(u_2-u_1) |u_1| |v_2| + u_1 (|u_1| |v_2| - |v_1| |u_2|)}  {(1+|u_2|+ku_2^2)(1+|u_1|+ku_1^2)}
\right\|_{L_2}   \\
= & \left\|\frac{(u_2-u_1) |u_1| |v_2| + u_1 |v_2| (|u_1|-|u_2|) + u_1 |u_2|(|v_2|-|v_1|)}  {(1+|u_2|+ku_2^2)(1+|u_1|+ku_1^2)}
\right\|_{L_2}   \\
\leq & \left\|\frac{(u_2-u_1) |u_1| |v_2| }  {1+|u_1|+ku_1^2}
\right\|_{L_2}   
+\left\|\frac{ u_1 |v_2| (|u_1|-|u_2|)}  {1+|u_1|+ku_1^2}
\right\|_{L_2}  \\
&+\left\|\frac{u_1 |u_2|(|v_2|-|v_1|)}  {1+|u_1|+ku_1^2}
\right\|_{L_2}  \\
\leq & \|(u_2-u_1)  |v_2| \|_{L_2}   
+\| |v_2| (|u_1|-|u_2|)\|_{L_2}  
+\| |u_2|(|v_2|-|v_1|)\|_{L_2}  \\
\leq & 2\|u_2-u_1   \|_{L_2}   \|v_2\|_{L_\infty}
+\|u_2\|_{L_\infty} \|v_2-v_1\|_{L_2}  \\
\leq & 3\|x-y\| \|y\|_\infty.
\end{align*}

For the last term, $\bar F_{13}$,  we have
\begin{align*}
\bar F_{13} = & \left\|\frac{ k(u_1^2u_2 (|v_2|-|v_1|)
+ku_1|v_1| u_2(u_1-u_2)}  {(1+|u_2|+ku_2^2)(1+|u_1|+ku_1^2)}
\right\|_{L_2} \\
\leq  & \left\|\frac{ ku_1^2u_2 (|v_2|-|v_1|)}  {1+ku_1^2 |u_2|}
\right\|_{L_2}  
+\left\|\frac{ku_1 u_2|v_1|(u_1-u_2)}  {1+|u_1 u_2|}
\right\|_{L_2}  \\
\leq & \|v_2-v_1 \|_{L_2}   
+k\|v_1\|_{L_\infty} \|u_1-u_2\|_{L_2}  \\
\leq & (1+k\|x\|_\infty) \|x-y\|.
\end{align*}

The above estimates for the three terms give that
\begin{align*}
&\left\|\frac{ u_2|v_2|}{1+|u_2|+ku_2^2}-\frac{ u_1|v_1|}{1+|u_1|+ku_1^2}\right\|_{L_2} \\
&\leq \|x-y\| [1+ (k+1)\|x\|_\infty+4\|y\|_\infty )] \\
&\leq \max\{k+1,4\} (1+\|x\|_\infty+\|y\|_\infty)  \|x-y\|.
\end{align*}

Similarly, we have an estimate for the norm in $L_2$ of the second component:
\begin{align*}
&\left\|\frac{ |u_2|v_2}{1+|u_2|+ku_2^2}-\frac{ |u_1|v_1}{1+|u_1|+ku_1^2}
\right\|_{L_2} \\
&\leq \max\{k+1,4\} (1+\|x\|_\infty+\|y\|_\infty)  \|x-y\|.
\end{align*}
Therefore, we arrive at
\begin{equation}   \label{Ton9}   
\|\bar F(x)-\bar F(y)\| \leq \sqrt {2} \gamma \rho \max\{k+1,4\} (1+\|x\|_{\infty}+\|y\|_{\infty})\|x-y\|.
\end{equation}

In the meantime, we have
 \begin{align*}
\|\bar F(x)\|^2 =&\gamma^2 \left\|a-\frac{ \rho u_1 v_1}{1+|u_1|+ku_1^2}
\right\|_{L_2}^2 
+\gamma^2 \left\|\beta b-\frac{ \rho |u_1| v_1}{1+|u_1|+ku_1^2}
\right\|_{L_2}^2 \\
&\leq \gamma^2 (a \text {Vol}(\Omega) + \rho \|v_1\|_{L_2})^2+\gamma^2 (\beta b \text {Vol}(\Omega) + \rho \|v_1\|_{L_2})^2  \\
&\leq 2\gamma^2 [\max\{a, \gamma b\}  \text {Vol}(\Omega) + \rho \|x\|]^2,
\end{align*}
where $\text {Vol}(\Omega)=\int_\Omega dz.$ 
This means that
\begin{equation}   \label{Ton10}
\|\bar F(x)\|\leq \sqrt {2} \gamma [\max\{a, \gamma b\}  \text {Vol}(\Omega) + \rho \|x\|].
\end{equation}

Thus, the lemma has been proved due to \eqref{Ton9} and \eqref{Ton10}.
\end{proof}

Using $A$ and $\bar F$,  the system  \eqref{Ton1} coupled with  \eqref{Ton2} is formulated as an abstract equation of the form 
\begin{equation} \label{Ton11}
\begin{cases}
\frac{dX}{dt}+AX=\bar F(X), \hspace{1cm} 0< t<\infty,\\
X(0)=X_0  \in V
\end{cases}
\end{equation}
 in $E$, where 
$X_0=\left(
\begin{matrix} 
u_0\\
v_0
\end{matrix}\right).
$
At the end of Section  \ref{local solutions}, we conclude that a solution of \eqref{Ton11} is also a solution of \eqref{Ton1} coupled with  \eqref{Ton2}.

\begin{definition}  \label{THM4}
\begin{itemize}
  \item [(i)] 
A continuous function $X$ on $[0,\infty)$ is called a mild solution to \eqref{Ton11} if
$$X(t)= e^{-tA}  X_0+\int_0^t e^{-(t-s)A}\bar F(X(s))ds, \hspace{1cm} 0\leq t<\infty,$$
where $\{e^{-tA}, t\geq 0\}$ is the semigroup generated by operator $A$ in $E$.
\item  [(ii)] A continuous function $X$  on $[0,\infty)$ is called a strong solution to \eqref{Ton11} if 
    $$X\in \mathcal C^1((0,\infty);E), \quad X(t)\in \mathcal D(A), \hspace{1cm} 0<t<\infty,$$
     and $X$ satisfies \eqref{Ton11}.
\end{itemize}
\end{definition}

\begin{proposition}  \label{THM5}
\begin{itemize}
  \item [{\rm (i)}]
A strong solution of  \eqref{Ton11} is always a mild solution. 
\item [{\rm (ii)}] Let $X$ be a mild solution of  \eqref{Ton11}. Assume that the function $G$   defined by
   $$G(t)=\int_0^t (t-s)^{-1} \|\bar F(X(s))-\bar F(X(t))\| ds,$$
is defined and integrable on any closed interval of $(0, \infty)$.
Then, $X$ becomes a strong solution of \eqref{Ton11}.
\end{itemize}
\end{proposition}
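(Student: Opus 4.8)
The plan is to prove the two parts by standard analytic–semigroup arguments. For (i) I fix $t>0$ and differentiate the map $s\mapsto e^{-(t-s)A}X(s)$ on $(0,t]$. Because $X$ is a strong solution, $X(s)\in\mathcal D(A)$ and $\frac{dX}{ds}=-AX(s)+\bar F(X(s))$; since $A$ commutes with $e^{-(t-s)A}$ on $\mathcal D(A)$, the two $AX(s)$ contributions cancel and $\frac{d}{ds}\big[e^{-(t-s)A}X(s)\big]=e^{-(t-s)A}\bar F(X(s))$. Integrating over $[\epsilon,t]$ gives $X(t)-e^{-(t-\epsilon)A}X(\epsilon)=\int_\epsilon^t e^{-(t-s)A}\bar F(X(s))\,ds$. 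Letting $\epsilon\to0^+$, continuity of $X$ at $0$ together with \eqref{Ton5} and the strong continuity of the semigroup yield $e^{-(t-\epsilon)A}X(\epsilon)\to e^{-tA}X_0$, while the bound $\|\bar F(X(s))\|\le c_\infty(1+\|X(s)\|)$ of Lemma \ref{THM3} dominates the integrand on $[0,t]$, so the integral converges to $\int_0^t e^{-(t-s)A}\bar F(X(s))\,ds$. This is exactly the identity of Definition \ref{THM4}(i).

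For (ii) the heart is to show $X(t)\in\mathcal D(A)$ for $t>0$. I would write $X(t)=e^{-tA}X_0+\Phi(t)$, with $\Phi(t)=\int_0^t e^{-(t-s)A}\bar F(X(s))\,ds$, and decompose $\Phi=\Phi_1+\Phi_2$, where $\Phi_2(t)=\big(\int_0^t e^{-\sigma A}\,d\sigma\big)\bar F(X(t))=A^{-1}(I-e^{-tA})\bar F(X(t))$ and $\Phi_1(t)=\int_0^t e^{-(t-s)A}\big[\bar F(X(s))-\bar F(X(t))\big]\,ds$. The term $e^{-tA}X_0$ lies in $\mathcal D(A)$ for $t>0$ by analyticity, and $\Phi_2(t)\in\mathcal D(A)$ with $A\Phi_2(t)=(I-e^{-tA})\bar F(X(t))$. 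For $\Phi_1$ I truncate the integral at $t-\eta$, move $A$ inside (valid since $e^{-(t-s)A}$ maps $E$ into $\mathcal D(A)$ off the diagonal), and estimate $\|Ae^{-(t-s)A}[\bar F(X(s))-\bar F(X(t))]\|\le\iota_1(t-s)^{-1}\|\bar F(X(s))-\bar F(X(t))\|$ via \eqref{Ton4}. The hypothesis that $G$ is finite and locally integrable makes $\int_0^t Ae^{-(t-s)A}[\,\cdot\,]\,ds$ absolutely convergent; letting $\eta\to0$ and using that $A$ is closed gives $\Phi_1(t)\in\mathcal D(A)$. Hence $X(t)\in\mathcal D(A)$ for every $t>0$.

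To recover the equation I compute the right difference quotient $h^{-1}[\Phi(t+h)-\Phi(t)]$. Using the semigroup property, $\Phi(t+h)=e^{-hA}\Phi(t)+\int_t^{t+h}e^{-(t+h-s)A}\bar F(X(s))\,ds$, so the quotient splits into $h^{-1}[e^{-hA}-I]\Phi(t)$, which tends to $-A\Phi(t)$ precisely because $\Phi(t)\in\mathcal D(A)$, and $h^{-1}\int_t^{t+h}e^{-(t+h-s)A}\bar F(X(s))\,ds$, which tends to $\bar F(X(t))$ by the continuity of $s\mapsto\bar F(X(s))$ (guaranteed near $t$ by the finiteness of $G$) and the strong continuity of the semigroup. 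Combining with $\frac{d}{dt}e^{-tA}X_0=-Ae^{-tA}X_0$, the right derivative of $X$ equals $-AX(t)+\bar F(X(t))$, which is \eqref{Ton11}, while $X(0)=X_0$ follows directly from the mild formula.

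The main obstacle I anticipate is not this formal differentiation but upgrading the right derivative to genuine $\mathcal C^1((0,\infty);E)$ regularity. For that I must show that $t\mapsto A\Phi(t)$ is continuous on $(0,\infty)$, hence that $t\mapsto-AX(t)+\bar F(X(t))$ is continuous; a function on an interval whose right derivative exists and is continuous is automatically $\mathcal C^1$ with that derivative. Establishing continuity of $A\Phi$ is the delicate point, since it requires controlling the singular kernel $(t-s)^{-1}$ uniformly and passing to the limit under the integral, which is exactly where the local integrability of $G$ (a Hölder-type modulus of continuity of $\bar F(X(\cdot))$ along the orbit) is used a second time, through a dominated-convergence argument. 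Once this continuity is secured, $X$ meets all requirements of Definition \ref{THM4}(ii) and is therefore a strong solution.
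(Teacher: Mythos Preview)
Your argument for (i) is correct and, in fact, spells out what the paper simply calls ``quite obvious.''

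For (ii) your route is genuinely different from the paper's. The paper works through the Yosida approximation $A_n=A(1+n^{-1}A)^{-1}$: since each $A_n$ is bounded, the auxiliary function $X_n(t)=e^{-tA_n}X_0+\int_0^t e^{-(t-u)A_n}\bar F(X(u))\,du$ can be differentiated termwise without any decomposition, giving $\frac{dX_n}{dt}=-A_nX_n+\bar F(X(t))$ and hence the integral identity $X_n(t)=X_n(\epsilon)+\int_\epsilon^t[\bar F(X(u))-A_nX_n(u)]\,du$. The hypothesis on $G$ is used as a dominating function to pass $n\to\infty$ inside the expression $A_nX_n(t)=A_ne^{-tA_n}X_0+\int_0^tA_ne^{-(t-u)A_n}[\bar F(X(u))-\bar F(X(t))]\,du+[I-e^{-tA_n}]\bar F(X(t))$, and a second time to pass to the limit in the integral identity itself, which yields $X(t)\in\mathcal D(A)$ together with $X(t)=X(\epsilon)+\int_\epsilon^t[\bar F(X(u))-AX(u)]\,du$ in one stroke. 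Your approach instead stays with $A$ itself, uses the $\Phi_1/\Phi_2$ splitting and the closedness of $A$ to land in $\mathcal D(A)$, and then computes the right derivative of $X$ directly. Both are standard semigroup arguments; the Yosida route packages the $\mathcal C^1$ conclusion into a single integral identity (so the ``delicate point'' you flag is essentially absorbed into dominated convergence), whereas your route is more hands-on but requires the extra step of verifying continuity of $t\mapsto AX(t)$ that you correctly isolate.

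One small correction: the continuity of $s\mapsto \bar F(X(s))$ that you need for $h^{-1}\int_t^{t+h}e^{-(t+h-s)A}\bar F(X(s))\,ds\to\bar F(X(t))$ is not really ``guaranteed by the finiteness of $G$''; that hypothesis only controls the one-sided modulus $\|\bar F(X(s))-\bar F(X(t))\|$ for $s<t$ in an integrated sense. What actually gives it is that $X\in\mathcal C([0,\infty);E)$ and $\bar F\colon E\to E$ is continuous (which follows directly from the structure of $\bar F$, since $u\mapsto u/(1+|u|+ku^2)$ is bounded and pointwise continuous; note that the Lipschitz estimate in Lemma~\ref{THM3} alone is not enough here because it carries $L_\infty$ factors). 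With that adjustment your plan goes through.
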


\begin{proof}
The part {\rm (i)} is quite obvious. So we only prove the part {\rm (ii)}.

Let $A_n, n=1, 2, 3 \dots,$ be the Yosida approximation of the operator $A$ defined by 
\begin{equation*}
A_n=A(1+\frac{A}{n})^{-1}=n-n^2(n+A)^{-1}, \hspace{1cm} n=1,2,3\dots
\end{equation*}
It is well known that 
\begin{itemize}
  \item $A_n$ generates an analytical semigroup $\{e^{-tA_n}, t\geq 0\}.$ Furthermore, 
  \begin{equation}   \label{Ton12}
    \begin{cases}
     \lim_{n\to \infty} A_n x=Ax \hspace{1cm} \text{ in } E, x\in \mathcal D(A), \\
    \lim_{n\to \infty} e^{-tA_n}=e^{-tA} \hspace{1cm} \text{ in } \mathcal L(E).
    \end{cases}
    \end{equation}
  \item For $0\leq \theta  <\infty,$
\begin{equation} \label{Ton13}
\|A_n^\theta e^{-tA_n}\| \leq \iota_\theta t^{-\theta}, \hspace{2cm} 0<t<\infty,   
\end{equation}
 and
\begin{equation} \label{Ton14}
\|e^{-tA_n}\| \leq \iota_0 e^{-\nu t}\leq \iota_0, \hspace{2cm}  0\leq t<\infty,
\end{equation}
where $ \iota_\theta$ and $\nu$ are the constants in \eqref{Ton4} and \eqref{Ton5}.
  \end{itemize}

Consider a function 
\begin{equation*}
X_n(t)=e^{-tA_n} X_0 +\int_0^t e^{-(t-u)A_n} \bar F(X(u)) du, \hspace{1cm} 0\leq t<\infty.
\end{equation*}
Then, \eqref{Ton12} and \eqref{Ton14} give 
$$\lim_{n\to \infty} X_n(t) =X(t), \hspace{1cm} 0\leq t<\infty.$$
In addition, since $A_n$ is a bounded operator, we have
\begin{align*}
\frac{dX_n}{dt}=&-A_n e^{-tA_n} X_0  - \int_0^t A_n e^{-(t-u)A_n} \bar F(X(u)) du +\bar F(X(t)) \\
=&-A_n X_n(t)+\bar F(X(t)), \hspace{1cm} 0<t<\infty.
\end{align*}
Therefore, for any $\epsilon>0$,
\begin{equation}  \label{Ton15}
X_n(t)=X_n(\epsilon) +\int_\epsilon^t [\bar F(X(u))-A_nX_n(u)]du, \hspace{1cm} \epsilon \leq t<\infty.
\end{equation}

We now want to show the convergence of $A_nX_n(t)$ for $0<t<\infty$. We have
\begin{align}
A_nX_n(t)=& A_n e^{-tA_n} X_0 +\int_0^t A_n e^{-(t-u)A_n} \bar F(X(u)) du \notag\\
=&  A_n e^{-tA_n} X_0 +\int_0^t A_n e^{-(t-u)A_n} [\bar F(X(u))-\bar F(X(t))] du  \notag\\
&+\int_0^t A_n e^{-(t-u)A_n} du \bar F(X(t))     \notag \\
=&   A_n e^{-tA_n} X_0 +\int_0^t A_n e^{-(t-u)A_n} [\bar F(X(u))-\bar F(X(t))] du    \label{Ton16}\\
&+[I-e^{-tA_n}]\bar F(X(t)).  \notag
\end{align}
The latter integral is bounded due to  \eqref{Ton13}: 
\begin{align}
& \int_0^t \|A_n e^{-(t-u)A_n} [\bar F(X(u))-\bar F(X(t))]\| du    \notag\\
& \leq \int_0^t (t-u)^{-1}\|\bar F(X(u))-\bar F(X(t))\|du=G(t).   \label{Ton17}
\end{align}
Hence,  the Lebesgue dominate theorem provides that 
\begin{align*}
\lim_{n\to \infty} & \int_0^t A_n e^{-(t-u)A_n} [\bar F(X(u))-\bar F(X(t))] du\\
&=\int_0^t A e^{-(t-u)A} [\bar F(X(u))-\bar F(X(t))] du.
\end{align*}
Thus,
\begin{align*}
\lim_{n\to \infty} A_nX_n(t) =& A e^{-tA} X_0 +\int_0^t A e^{-(t-u)A} [\bar F(X(u))-\bar F(X(t))] du\\
&+[I-e^{-tA}]\bar F(X(t)).
\end{align*}

Using this, we have
\begin{align*}
X(t)=&\lim_{n\to \infty} A_n^{-1} A_nX_n(t) \\
=& A^{-1} [A e^{-tA} X_0 +\int_0^t A e^{-(t-u)A} [\bar F(X(u))-\bar F(X(t))] du\\
&+[I-e^{-tA}]\bar F(X(t)).
\end{align*}
This shows that
$$X(t) \in \mathcal D(A(t)),$$
 and
\begin{align*}
AX(t)=&A e^{-tA} X_0 +\int_0^t A e^{-(t-u)A} [\bar F(X(u))-\bar F(X(t))] du\\
&+[I-e^{-tA}]\bar F(X(t)), \hspace{1cm} 0<t<\infty.
\end{align*}

Let us next consider the convergence in \eqref{Ton15} when $n\to \infty$. Thanks to \eqref{Ton16} and \eqref{Ton17},
\begin{align*}
\|A_nX_n(t)\|\leq & t^{-1} \|X_0\| +\int_0^t (t-u)^{-1}\|\bar F(X(u))-\bar F(X(t))\|du \notag\\
&+(1+\iota_0) \|\bar F(X(t))\|  \notag  \\
=&t^{-1} \|X_0\| +(1+\iota_0) \|\bar F(X(t))\|+ G(t), \hspace{1cm} 0<t<\infty.
\end{align*}
The Lebesgue dominate theorem applied to \eqref{Ton15} then provides that 
\begin{equation*}    
X(t)=X(\epsilon) +\int_\epsilon^t [\bar F(X(u))-AX(u)]du, \hspace{1cm} \epsilon \leq t<\infty.
\end{equation*}
Therefore, $X$ is differentiable in $[\epsilon, \infty)$. Since $\epsilon>0$ is arbitrary,
$$X\in \mathcal C^1((0,\infty);E),$$
and $X$ satisfies the equation \eqref{Ton11}. This means that $X$ is a strong solution of \eqref{Ton11}. The proposition has thus been proved.
\end{proof}

\begin{remark}  
Proposition  \ref{THM5} deals with global solutions of \eqref{Ton11}. It is clear that the proposition is still true for local solutions. In other words, let $X$ be a local mild solution of \eqref{Ton11}, i.e. $X$ satisfies the conditions in Definition \ref{THM4} on some interval $[0,T_{loc}]$. If the function $G$ defined in Proposition  \ref{THM5} is integrable on any closed interval of  $[0,T_{loc}]$, then $X$ becomes a strong solution on the same interval.
\end{remark}

\section{Nonnegative local solutions} \label{local solutions}
In this section, we show existence and uniqueness of  nonnegative local strong solutions to   \eqref{Ton11}.

Let us consider   \eqref{Ton11}  with initial condition $X(t_0)=\zeta \in V$ instead of the condition $X(0)=X_0 \in V,$ where $t_0$ is some  nonnegative constant. In other words, we consider the equation
\begin{equation} \label{Ton18}
\begin{cases}
\frac{dX}{dt}+AX=\bar F(X), \hspace{1cm} t_0< t<\infty,\\
X(t_0)=\zeta  \in V
\end{cases}
\end{equation}
 in $E$. (We do this because results related to \eqref{Ton18} in this section are used in next sections.)

\begin{theorem}  \label{THM6}
There exists a unique local strong solution $X$ on $[t_0, t_0+\tau(\|\zeta\|)] $ to \eqref{Ton18}, where $\tau\colon [0,\infty)\to (0,\infty)$ is some continuous function, which is independent of $t_0$.   Furthermore, for any $\frac{3}{4}<\eta<1$,
\begin{equation}  \label{Ton19}
\|A^\eta X(t)\| \leq  [1+(\iota_0+\iota_\eta) \|\zeta\| ]  (t-t_0)^{-\eta}, \hspace{1cm} t_0<t\leq  t_0+\tau(\|\zeta\|),
\end{equation}
and 
\begin{equation}  \label{Ton20}
\|A X(t)\| \leq \chi(\|\zeta\|)  [1+ (t-t_0)^{-1}], \hspace{1cm} t_0<t\leq  t_0+\tau(\|\zeta\|),
\end{equation}
where $\chi$ is some real-valued positive continuous function on $[0,\infty)$, and is independent of $t_0$.
\end{theorem}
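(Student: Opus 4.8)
The plan is to rewrite \eqref{Ton18} as the fixed-point equation
$$X=\Phi(X),\qquad \Phi(X)(t)=e^{-(t-t_0)A}\zeta+\int_{t_0}^t e^{-(t-s)A}\bar F(X(s))\,ds,$$
and to build the mild solution by the contraction mapping principle. Fix $3/4<\eta<1$. By \eqref{Ton8} we have $\mathcal D(A^\eta)=H^{2\eta}(\Omega)\times H^{2\eta}(\Omega)$, and since $2\eta>\tfrac32\ge\tfrac d2$ for all $d\le 3$, the Sobolev embedding yields $\|x\|_\infty\le C_S\|A^\eta x\|$. This is exactly why $\eta>3/4$ is imposed: it is the smallest exponent for which $\mathcal D(A^\eta)$ dominates the $L_\infty$-norm occurring in the Lipschitz bound of Lemma \ref{THM3}, uniformly over the admissible dimensions. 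Since $\zeta\in V\subset E$ need not lie in $\mathcal D(A^\eta)$, the quantity $\|A^\eta X(t)\|$ will blow up like $(t-t_0)^{-\eta}$ as $t\downarrow t_0$, so the working space must carry a weight.

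I would therefore work in the complete space of $X\in\mathcal C([t_0,t_0+\tau];E)$ with $X(t)\in\mathcal D(A^\eta)$ for $t>t_0$, equipped with the norm
$$|||X|||=\max\Big\{\sup_{t_0\le t\le t_0+\tau}\|X(t)\|,\ \sup_{t_0<t\le t_0+\tau}(t-t_0)^\eta\|A^\eta X(t)\|\Big\},$$
and in the closed ball of radius $R=1+(\iota_0+\iota_\eta)\|\zeta\|$. Estimating $\Phi(X)(t)$ in $E$ and $A^\eta\Phi(X)(t)$ via \eqref{Ton4}, \eqref{Ton5} and the sublinear bound $\|\bar F(x)\|\le c_\infty(1+\|x\|)$, every term coming from the integral carries a strictly positive power of $(t-t_0)\le\tau$, so $\Phi$ maps the ball into itself for $\tau$ small. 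The delicate point, and the main obstacle, is the contraction estimate: the factor $(1+\|X\|_\infty+\|Y\|_\infty)$ contributes $\sim(s-t_0)^{-\eta}$, and pairing this with a difference $\|X(s)-Y(s)\|$ bounded by the singular weight $(s-t_0)^{-\eta}|||X-Y|||$ would create the \emph{non-integrable} kernel $(t-s)^{-\eta}(s-t_0)^{-2\eta}$ (recall $2\eta>1$). The cure is built into the two-part norm: I estimate $\|X(s)-Y(s)\|$ in $E$ by the unweighted component of $|||X-Y|||$, leaving only the integrable kernel $(t-s)^{-\eta}(s-t_0)^{-\eta}$, so that after multiplying by $(t-t_0)^\eta$ all contributions again carry positive powers of $\tau$. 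Choosing $\tau=\tau(\|\zeta\|)$ small makes $\Phi$ a contraction; because the semigroup bounds and Lemma \ref{THM3} are translation invariant in time, $\tau$ depends only on $\|\zeta\|$ (through $R$) and the fixed constants, not on $t_0$, and can be taken continuous and decreasing in $\|\zeta\|$. The fixed point is the unique mild solution in the ball, a standard Gronwall argument removes the restriction to the ball, and the weighted component of the ball condition is precisely \eqref{Ton19}.

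To promote the mild solution to a strong one I would apply Proposition \ref{THM5}(ii). Applying it directly from $t_0$ is delicate because $G(t)$ may fail to be finite up to $t_0$ (the same double singularity reappears), so instead I restart from an arbitrary $a\in(t_0,t_0+\tau]$ with datum $X(a)\in\mathcal D(A^\eta)$. On $[a,t_0+\tau]$ the $\mathcal D(A^\eta)$-bound is uniform, and from the mild formula together with \eqref{Ton6} one obtains local Hölder continuity of $X$ in $E$; this makes the associated $G$ finite on closed subintervals, so $X$ is strong on $[a,t_0+\tau]$. Letting $a\downarrow t_0$, with continuity at $t_0$ coming from $e^{-(t-t_0)A}\zeta\to\zeta$, gives a strong solution on $[t_0,t_0+\tau]$.

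Finally, for \eqref{Ton20} I would write $AX(t)=Ae^{-(t-t_0)A}\zeta+\int_{t_0}^t Ae^{-(t-s)A}\bar F(X(s))\,ds$ and split the integral at the midpoint $m=\tfrac{t_0+t}{2}$. On $[t_0,m]$ one has $t-s\ge(t-t_0)/2$, and with $\|\bar F(X(s))\|$ bounded on the ball the factor $\int_{t_0}^m(t-s)^{-1}\,ds=\log 2$ keeps this piece bounded by a constant. On $[m,t]$ the variable $s$ is bounded away from $t_0$, so after subtracting $\bar F(X(t))$—which produces the harmless bounded term $[I-e^{-(t-m)A}]\bar F(X(t))$—the remainder converges through the Hölder continuity of $X$ and the integrable kernel $(t-s)^{\eta-1}$. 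The only genuinely singular term is $\|Ae^{-(t-t_0)A}\zeta\|\le\iota_1(t-t_0)^{-1}\|\zeta\|$, which accounts for the $(t-t_0)^{-1}$ in \eqref{Ton20}; absorbing all constants into a continuous function $\chi(\|\zeta\|)$ gives the claimed estimate.
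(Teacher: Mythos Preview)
Your proposal is correct and follows essentially the same route as the paper: the same weighted space, the same ball radius $R=1+(\iota_0+\iota_\eta)\|\zeta\|$, the same observation that the \emph{unweighted} $E$-component of $|||X-Y|||$ must be used in the contraction step to avoid the non-integrable kernel $(t-s)^{-\eta}(s-t_0)^{-2\eta}$, and the same Gronwall argument for uniqueness outside the ball.

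There are two places where you diverge tactically. For the upgrade to a strong solution, the paper does \emph{not} restart from an interior point; it applies Proposition~\ref{THM5}(ii) directly from $t_0$ after proving a H\"older estimate $\|X(t)-X(s)\|\le C[(t-s)+(t-s)^\epsilon(s-t_0)^{-\epsilon}+(t-s)^\epsilon(s-t_0)^{1-\epsilon}]$ with $0<\epsilon<1-\eta$. With this choice of exponent the double integral $\int_{t_0}^S\!\int_{t_0}^t(t-s)^{-1}\|\bar F(X(t))-\bar F(X(s))\|\,ds\,dt$ is finite, so your concern that ``$G(t)$ may fail to be finite up to $t_0$'' is unnecessary---the direct approach works once the H\"older exponent is taken small enough. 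Your restart trick is a valid alternative, just slightly more roundabout. For \eqref{Ton20}, the paper again avoids any splitting: it subtracts $\bar F(X(t))$ over the whole interval $[t_0,t]$, producing the bounded term $[I-e^{-(t-t_0)A}]\bar F(X(t))$, and then controls $\int_{t_0}^t(t-u)^{-1}\|\bar F(X(u))-\bar F(X(t))\|\,du$ via the same H\"older estimate above. Your midpoint splitting is equally valid and perhaps conceptually cleaner, but the paper's approach has the advantage of reusing the very estimate already established for the strong-solution step.
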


\begin{proof}
We  use the fixed point theorem for contractions to prove existence and uniqueness of  local mild solutions. We then use Proposition \ref{THM5} to show that the local mild solution is a strong solution.

Let's fix $\frac{3}{4}<\eta<1$ and $t_0<T<\infty$.  Set the underlying space
\begin{align*}
\Xi (T)=&\{Y\in \mathcal C((t_0,T];\mathcal D(A^\eta)) \cap  \mathcal C([t_0,T];E) \text{  such that   }\\
& \sup_{t_0<t\leq T} (t-t_0)^\eta \|A^\eta Y(t)\| <\infty \}.
\end{align*}
Then, $\Xi (T)$  becomes a Banach space with norm
\begin{equation*} 
\|Y\|_{\Xi (T)}=\sup_{t_0<t\leq T} (t-t_0)^\eta \|A^\eta Y(t)\|+ \sup_{t_0\leq t\leq T} \|Y(t)\|. 
\end{equation*}

Consider a subset $\Upsilon(T) $ of $\Xi (T)$ which consists of  functions $Y\in \Xi (T)$  such that
\begin{equation}  \label{Ton21}
\max\left\{\sup_{t_0<t\leq T} (t-t_0)^\eta \|A^\eta Y(t)\|,  \sup_{t_0\leq t\leq T} \|Y(t)\|\right\} \leq C_1,
\end{equation}
where 
\begin{equation*}
C_1= 1+(\iota_0+\iota_\eta)\|\zeta\|.
\end{equation*}
Obviously, $\Upsilon(T) $  is a nonempty closed subset of $\Xi (T)$.

For $Y\in \Upsilon(T)$, we define a function on $[t_0,T]$ by
\begin{align*}  
\Phi Y(t)=&e^{-(t-t_0)A} \zeta+\int_{t_0}^t e^{-(t-s)A}[\bar F(Y(s))ds. 
\end{align*}
Our goal is then to verify that $\Phi$ is a contraction mapping from $\Upsilon(T)$ into itself, provided  $T$ is sufficiently small, and that the fixed point of $\Phi$ is the desired solution of  \eqref{Ton18}. For this purpose, we divide the proof into six steps.

{\bf Step 1}. Let us show that $ \Phi Y \in \Upsilon(T)$ for  $Y \in \Upsilon(T). $

Thanks to \eqref{Ton5} and Lemma \ref{THM3},
\begin{align*}
\|\Phi Y(t)\| \leq & \|e^{-(t-t_0)A}\zeta\|+ \int_{t_0}^t \|e^{-(t-s)A}\| \|\bar F(Y(s))\|ds \\
\leq & \iota_0 \|\zeta\| +\iota_0 c_\infty \int_{t_0}^t  (1+\|Y(s)\|)ds  \\
\leq & \iota_0 \|\zeta\| +\iota_0 c_\infty (1+C_1) (t-t_0), \hspace{1cm} t_0\leq t\leq T.
\end{align*}
Furthermore, \eqref{Ton4} gives 
\begin{align*}
\|A^\eta \Phi Y(t)\| \leq & \|A^\eta  e^{-(t-t_0)A}\zeta\|+ \int_{t_0}^t \|A^\eta  e^{-(t-s)A}\| \|\bar F(Y(s))\|ds \\
\leq & \iota_\eta (t-t_0)^{-\eta}  \|\zeta\| +\iota_\eta c_\infty \int_{t_0}^t (t-s)^{-\eta}  [1+\|Y(s)\|]ds  \\
\leq & \iota_\eta (t-t_0)^{-\eta}  \|\zeta\| +\iota_\eta c_\infty (1+C_1) \int_{t_0}^t (t-s)^{-\eta}ds \\
=&\iota_\eta (t-t_0)^{-\eta}  \|\zeta\| +\frac{\iota_\eta c_\infty (1+C_1)  (t-t_0)^{1-\eta}   }{1-\eta}    \hspace{1cm} t_0< t\leq T.
\end{align*}

Therefore, it is easily seen that 
\begin{equation*}  
\max\left\{\sup_{t_0<t\leq T} (t-t_0)^\eta \|A^\eta \Phi Y(t)\|,  \sup_{t_0\leq t\leq T} \|\Phi Y(t)\|\right\} \leq C_1,
\end{equation*}
provided  $T$ is sufficiently small. 
In fact,  $T$ can be chosen to be any function of $\|\zeta\|$ such that
\begin{equation}   \label{Ton22}
T-t_0\leq \min\Big\{\frac{1}{\iota_0},\frac{1}{\iota_\eta}\Big\}  
\frac{1+[\iota_\eta+\iota_0(1-\eta)]\|\zeta\|}{c_\infty [2+(\iota_0+\iota_\eta)\|\zeta\|]}.
\end{equation}

It now remains to prove the continuity of $\Phi Y$ and $A^\eta \Phi Y$ on $[t_0,T]$ and $(t_0,T],$ respectively. For $t_0<s<t\leq T$, the semigroup property gives 
\begin{align*}
\Phi Y(t)=& e^{-(t-s)A} e^{-(s-t_0)A}\zeta+\int_s^t e^{-(t-u)A} \bar F(Y(u))du \\
& +e^{-(t-s)A} \int_{t_0}^s e^{-(s-u)A} \bar F(Y(u))du.
\end{align*}
Using this equality, \eqref{Ton4} and Lemma \ref{THM3}, we have
\begin{align*}
& \|A^\eta \Phi Y(t)-A^\eta \Phi Y(s)\|\\
 \leq  & \|A^\eta  e^{-(s-t_0)A}[e^{-(t-s)A}-I] \zeta\|  +\int_s^t \|A^\eta e^{-(t-u)A}\| \| \bar F(Y(u))\|du\\
&+\|e^{-(t-s)A}-I\|  \int_{t_0}^s \|A^\eta e^{-(s-u)A}\| \| \bar F(Y(u))\|du\\
 \leq  &  \iota_\eta \|\zeta\| (s-t_0)^{-\eta} \|e^{-(t-s)A}-I\|
+\iota_\eta c_\infty \int_s^t (t-u)^{-\eta} (1+\|Y(u)\|) du\\
&+\iota_\eta  c_\infty\|e^{-(t-s)A}-I\|  \int_{t_0}^s (s-u)^{-\eta}
(1+\|Y(u)\|) du \\
 \leq  &  \iota_\eta \|\zeta\| (s-t_0)^{-\eta} \|e^{-(t-s)A}-I\|
+\iota_\eta c_\infty (1+C_1) \int_s^t (t-u)^{-\eta} du\\
&+\iota_\eta  c_\infty (1+C_1) \|e^{-(t-s)A}-I\|  \int_{t_0}^s (s-u)^{-\eta} du\\
 =  &  \iota_\eta \|\zeta\| (s-t_0)^{-\eta} \|e^{-(t-s)A}-I\|
+\frac{\iota_\eta c_\infty (1+C_1)  (t-s)^{1-\eta} }{1-\eta}  \\
&+\frac{\iota_\eta  c_\infty (1+C_1)  (s-t_0)^{1-\eta}  \|e^{-(t-s)A}-I\| }{1-\eta}.
\end{align*}
Since 
$$\lim_{t\to s} \|e^{-(t-s)A}-I\|=0,$$
we observe that 
$$\lim_{t\to s} \|A^\eta \Phi Y(t)-A^\eta \Phi Y(s)\|=0.$$
Thus,   $A^\eta \Phi Y$ is continuous on $(t_0,T].$ As a consequence, $ \Phi Y$ is also continuous on $(t_0,T]$ because
$A^{-\eta}\in L(E)$ and
$\Phi Y=A^{-\eta} (A^\eta \Phi).$

We finally show the continuity of $ \Phi Y$ at $t=t_0$. Due to \eqref{Ton5}, \eqref{Ton21}, and Lemma \ref{THM3}, we have
\begin{align*}
\|\Phi Y(t)-\Phi Y(t_0)\|=& \|[e^{-(t-t_0)A}-I]\zeta+\int_{t_0}^t e^{-(t-u)A} \bar F(Y(u))du \|   \\
\leq & \|[e^{-(t-t_0)A}-I]\zeta\|+\iota_0 c_\infty \int_{t_0}^t [1+\|Y(u)\|]du   \\
\leq & \|[e^{-(t-t_0)A}-I]\zeta\|+\iota_0 c_\infty (1+C_1) (t-t_0).
\end{align*}
Thus,
$$\lim_{t\to t_0} \Phi Y(t)=\Phi Y(t_0).$$

{\bf Step 2}. Let us show that $\Phi$ is a contraction mapping on $\Upsilon(T),$ provided  $T$ is sufficiently small. 

Let $Y_1, Y_2 \in \Upsilon(T).$ Lemma \ref{THM3} and \eqref{Ton5}  give
\begin{align*}
&\|\Phi Y_1(t)-\Phi Y_2(t)\|\\
&=\|\int_{t_0}^t e^{-(t-u)A} [\bar F(Y_1(u))-\bar F(Y_2(u))] du \|  \\
 &\leq \iota_0 c_\infty \int_{t_0}^t  [1+\|Y_1(s)\|_{\infty} +\|Y_2(s)\|_{\infty}]  \| Y_1(u)-Y_2(u)\| du, \hspace{0.5cm} t_0\leq t\leq T. 
\end{align*}
Since
$\mathcal D(A^\eta)=H^{2\eta}(\Omega)\times H^{2\eta}(\Omega) $
and $H^{2\eta}(\Omega) \subset \mathcal C(\Omega) $ with continuous embedding, there exists a constant $C$ such that 
\begin{equation}    \label{Ton23}
\|Y_i(s)\|_{\infty} \leq C\|A^\eta Y_i(s)\|, \hspace{1cm} t_0<s\leq T, i=1,2.
\end{equation}
Thus,
\begin{align}
&\|\Phi Y_1(t)-\Phi Y_2(t)\|     \notag\\
 &\leq \iota_0 c_\infty \int_{t_0}^t  [1+C\|A^\eta Y_1(u)\| +C\|A^\eta Y_2(u)\|]  \| Y_1(u)-Y_2(u)\| du     \notag\\
 &\leq \iota_0 c_\infty \int_{t_0}^t  [1+2CC_1 (u-t_0)^{-\eta}] \| Y_1(u)-Y_2(u)\| du  \hspace{1cm} (\text{see } \eqref{Ton21})       \notag\\
&\leq \iota_0 c_\infty \int_{t_0}^t  [1+2CC_1 (u-t_0)^{-\eta}] du \| Y_1-Y_2\|_{\Xi (T)}     \notag\\
&= \iota_0 c_\infty   \Big[t-t_0+\frac{2CC_1 (t-t_0)^{1-\eta})} {1-\eta} \Big]  \| Y_1-Y_2\|_{\Xi (T)}, \hspace{0.5cm} t_0\leq t\leq T.   \label{Ton24}
\end{align}

Similarly, we have for $t_0\leq t\leq T$, 
\begin{align}
(t-t_0)^\eta &\|A^\eta[\Phi Y_1(t)-\Phi Y_2(t)]\|     \notag\\
&=(t-t_0)^\eta \|\int_{t_0}^t A^\eta e^{-(t-u)A} [\bar F(Y_1(u))-\bar F(Y_2(u))] du \|       \notag\\
 &\leq \iota_\eta c_\infty (t-t_0)^\eta  \int_{t_0}^t (t-u)^{-\eta} [1+C\|A^\eta Y_1(u)\| +C\|A^\eta Y_2(u)\|]    \notag\\
& \hspace{2cm} \times \| Y_1(u)-Y_2(u)\| du     \notag\\
 &\leq \iota_\eta c_\infty (t-t_0)^\eta \int_{t_0}^t (t-u)^{-\eta} [1+2CC_1 (u-t_0)^{-\eta}] \notag\\
& \hspace{2cm} \times  \| Y_1(u)-Y_2(u)\| du        \notag\\
&\leq \iota_\eta c_\infty (t-t_0)^\eta  \int_{t_0}^t  (t-u)^{-\eta} [1+2CC_1 (u-t_0)^{-\eta}] du \| Y_1-Y_2\|_{\Xi (T)}        \notag\\
&= \iota_\eta c_\infty   \Big[\frac{t-t_0} {1-\eta} +2CC_1 B(1-\eta,1-\eta)(t-t_0)^{1-\eta} \Big]  \| Y_1-Y_2\|_{\Xi (T)},   \label{Ton25}
\end{align}
 where $B(\cdot,\cdot)$ is the Beta function.

Combining the estimates \eqref{Ton24} and \eqref{Ton25}, we obtain that 
\begin{align*}
&\| \Phi Y_1-\Phi Y_2\|_{\Xi (T)} \\
\leq & c_\infty\Big[(\iota_0+\frac{\iota_\eta}{1-\eta})(T-t_0)+2CC_1\{\frac{\iota_0}{1-\eta}+\iota_\eta B(1-\eta,1-\eta)\}(T-t_0)^{1-\eta}\Big]  \\
& \times \| Y_1-Y_2\|_{\Xi (T)}.
\end{align*}
This shows that $\Phi$ is contraction on $\Upsilon(T),$ provided  $T$ is sufficiently small. In fact, $T$ can be chosen to be any function of $\|\zeta\|$ such that
\begin{equation}  \label{Ton26}
(\iota_0+\frac{\iota_\eta}{1-\eta})(T-t_0)+2CC_1[\frac{\iota_0}{1-\eta}+\iota_\eta B(1-\eta,1-\eta)](T-t_0)^{1-\eta} <\frac{1}{c_\infty}.
\end{equation}

{\bf Step 3.} Let us prove existence of a local mild solution to  \eqref{Ton18}.

Let $T>t_0$ be sufficiently small in such a way that $\Phi$ maps $\Upsilon(T)$ into itself and is contraction with respect to the norm of $\Xi (T)$. Because of  \eqref{Ton22} and \eqref{Ton26}, we can choose $T=t_0+\tau(\|\zeta\|)$, where $\tau(\|\zeta\|)$ continuously depends only on $\|\zeta\|$.  Thanks to the fixed point theorem, there exists a unique function $X\in \Upsilon(t_0+\tau(\|\zeta\|))$ such that $X(t)=\Phi X(t)$ for $t_0\leq t\leq t_0+\tau(\|\zeta\|).$ This means that $X$ is a local mild solution of  \eqref{Ton18} on $[t_0,t_0+\tau(\|\zeta\|)]$. 

{\bf Step 4.} Let us prove uniqueness of  local mild solutions to  \eqref{Ton18}.

Suppose that $\bar X$ is any other  local mild solution of  \eqref{Ton18} on $[t_0,t_0+\tau(\|\zeta\|)]$.  The solution formula of $\bar X$, \eqref{Ton5}, and Lemma \ref{THM3}  then imply that
\begin{align*}
\|\bar X(t)\| \leq & \|e^{-(t-t_0)A}\zeta\|+ \int_{t_0}^t \|e^{-(t-s)A} \bar F(\bar X(s))\| ds  \\
\leq & \iota_0 \|\zeta\| + \iota_0 c_\infty \int_{t_0}^t [1+\|\bar X(s)\|]ds, \hspace{1cm} t_0\leq t\leq t_0+\tau(\|\zeta\|).
\end{align*}
The Gronwall inequality then provides that 
\begin{align*}
1+\|\bar X(t)\| & \leq (1+\iota_0 \|\zeta\|) e^{\iota_0c_\infty (t-t_0)}  \\
&\leq 
(1+\iota_0 \|\zeta\|) e^{\iota_0c_\infty \tau(\|\zeta\|)},
 \hspace{1cm} t_0\leq t\leq t_0+\tau(\|\zeta\|).
\end{align*}
Using this, we have for $ t_0< t\leq t_0+\tau(\|\zeta\|)$, 
\begin{align*}
& \|A^\eta \bar X(t)\| \\
\leq & \|A^\eta e^{-(t-t_0)A}\zeta\|+ \int_{t_0}^t \|A^\eta e^{-(t-s)A} \bar F(\bar X(s))\| ds  \\
\leq & \iota_\eta \|\zeta\|(t-t_0)^{-\eta} + \iota_\eta c_\infty \int_{t_0}^t (t-s)^{-\eta} [1+\|\bar X(s)\|]ds\\
\leq & \iota_\eta \|\zeta\|(t-t_0)^{-\eta} + \iota_\eta c_\infty (1+\iota_0 \|\zeta\|) e^{\iota_0c_\infty \tau(\|\zeta\|)} \int_{t_0}^t (t-s)^{-\eta} ds\\
=& \iota_\eta \|\zeta\|(t-t_0)^{-\eta} + \frac{\iota_\eta c_\infty (1+\iota_0 \|\zeta\|) e^{\iota_0c_\infty \tau(\|\zeta\|)}  (t-t_0)^{1-\eta}} {1-\eta}.
\end{align*}
Hence,
\begin{align*}
\|A^\eta \bar X(t)\| 
&\leq C(\|\zeta\|) [(t-t_0)^{-\eta}+(t-t_0)^{1-\eta}],  \hspace{1cm} t_0< t\leq t_0+\tau(\|\zeta\|),
\end{align*}
where $C(\|\zeta\|) $ is a function of  $\|\zeta\|$ and is independent of $t_0$.

Let us estimate the difference between $X$ and $\bar X$. We observe from the latter inequality, Lemma \ref{THM3},  \eqref{Ton5}, and \eqref{Ton21} that 
\begin{align*}
&\|X(t)-\bar X(t)\|\\
&= \|\int_{t_0}^t e^{-(t-s)A} [\bar F(X(s))-\bar F(\bar X(s)) ]ds\|\\
 & \leq\iota_0 c_\infty  \int_{t_0}^t [1+\|X(s)\|_{\infty}+\|\bar X(s)\|_{\infty}] \|X(s)-\bar X(s)\|ds\\
 &\leq \iota_0 c_\infty  \int_{t_0}^t [1+\|A^\eta X(s)\|+\|A^\eta \bar X(s)\|] \|X(s)-\bar X(s)\|ds \\
& \leq  \iota_0 c_\infty  \int_{t_0}^t [1+C_1 (s-t_0)^{-\eta}  +C(\|\zeta\|) \{(s-t_0)^{-\eta}+(s-t_0)^{1-\eta}\}]  \\
&  \hspace{2cm}\times \|X(s)-\bar X(s)\|ds,
 \hspace{1cm} t_0\leq t\leq t_0+\tau(\|\zeta\|).
\end{align*}

Let $t_0\leq S \leq t_0+\tau(\|\zeta\|)$. Taking supremum on $[t_0,S]$ in both the hand sides of the above estimate, we arrive at
\begin{align}    
&\sup_{t\in [t_0,S]}\|X(t)-\bar X(t)\|   \notag\\
& \leq  \iota_0  \sup_{t\in [t_0,S]} \int_{t_0}^t [1+C_1 (s-t_0)^{-\eta}  +C(\|\zeta\|) \notag\\
& \hspace{1cm} \times  \{(s-t_0)^{-\eta}+(s-t_0)^{1-\eta}\}] ds   \sup_{s\in [t_0,S]}\|X(s)-\bar X(s)\|   \notag\\
&=\iota_0  \Big[S-t_0+\frac{\{C_1+C(\|\zeta\|)\} (S-t_0)^{1-\eta}}{1-\eta}  +\frac{ C(\|\zeta\|)(S-t_0)^{2-\eta}}{2-\eta}\Big]    \label{Ton27}\\
& \hspace{1cm} \times  \sup_{t\in [t_0,S]}\|X(t)-\bar X(t)\|.   \notag
\end{align}
We choose $t_0\leq S \leq t_0+\tau(\|\zeta\|)$ such that 
$$\iota_0  \Big[S-t_0+\frac{\{C_1+C(\|\zeta\|)\} (S-t_0)^{1-\eta}}{1-\eta}  +\frac{ C(\|\zeta\|)(S-t_0)^{2-\eta}}{2-\eta}\Big] <1.$$
Then, \eqref{Ton27} gives 
$$ \sup_{t\in [t_0,S]}\|X(t)-\bar X(t)\|=0,$$
that is
$$X(t)=\bar X(t), \hspace{1cm}   t_0\leq t\leq S.$$

Let $\tilde S=\sup\{t_0\leq S \leq t_0+\tau(\|\zeta\|); X(t)=\bar X(t) \text{ for all } t_0\leq t\leq S\}.$ By continuity, $X(\tilde S)=\bar X(\tilde S).$ Suppose that $\tilde S<t_0+\tau(\|\zeta\|)$. Repeating the same procedure with initial time $\tilde S$ and initial value $X(\tilde S)=\bar X(\tilde S),$  we obtain that for every sufficiently small $t>0,$
$$X(\tilde S+t)=\bar X(\tilde S+t).$$
This contradicts the definition of $\tilde S$. Thus,
$$X(t)=\bar X(t), \hspace{1cm}   t_0\leq t\leq t_0+\tau(\|\zeta\|).$$

{\bf Step 5.} Let us prove that the local mild solution $X$  on $[t_0,t_0+\tau(\|\zeta\|)]$ is a local strong solution to  \eqref{Ton18}. For the proof, we use Proposition \ref{THM5}--(ii).

First, we give an estimate for $X(t)-X(s), t_0 <s<t\leq t_0+\tau(\|\zeta\|).$ Let $0<\epsilon<1-\eta$. We have
\begin{align*}
& X(t)-X(s) \\
= & [e^{-(t-t_0)A}-e^{-(s-t_0)A}]\zeta +\int_s^t e^{-(t-u)A} \bar F(X(u)) du  \\
&+ \int_{t_0}^s [e^{-(t-u)A}-e^{-(s-u)A}] \bar F(X(u)) du   \\
= & [e^{-(t-s)A}-I] A^{-\epsilon} A^\epsilon e^{-(s-t_0)A}\zeta +\int_s^t e^{-(t-u)A} \bar F(X(u)) du  \\
&+ [e^{-(t-s)A}-I] A^{-\epsilon}\int_{t_0}^s A^\epsilon e^{-(s-u)A} \bar F(X(u)) du. 
\end{align*}
Then, \eqref{Ton4}, \eqref{Ton5}, \eqref{Ton6}, and Lemma \ref{THM3} give
\begin{align*}
& \|X(t)-X(s)\| \\
%
%
\leq  & \|[e^{-(t-s)A}-I] A^{-\epsilon}\| \| A^\epsilon e^{-(s-t_0)A}\| \|\zeta\| +\int_s^t \|e^{-(t-u)A}\| \|\bar F(X(u))\| du  \\
&+ \|[e^{-(t-s)A}-I] A^{-\epsilon}\| \int_{t_0}^s \|A^\epsilon e^{-(s-u)A}\| \| \bar F(X(u))\| du \\
\leq & \frac{(1-\epsilon)\iota_\epsilon \|\zeta\|}{\epsilon} (t-s)^\epsilon (s-t_0)^{-\epsilon}
+\iota_0 c_\infty \int_s^t [1+\|X(u)\|] du  \\
&+\frac{(1-\epsilon) \iota_\epsilon  c_\infty}{\epsilon}  (t-s)^\epsilon  \int_{t_0}^s  (s-u)^{-\epsilon} [1+\|X(u)\|] du.
\end{align*}
Since $X\in \Upsilon(t_0+\tau(\|\zeta\|))$, $X$ satisfies \eqref{Ton21}. Thus,
\begin{align}
& \|X(t)-X(s)\|    \notag \\
%
\leq & \frac{(1-\epsilon)\iota_\epsilon \|\zeta\|}{\epsilon} (t-s)^\epsilon (s-t_0)^{-\epsilon}
+\iota_0 c_\infty  (1+C_1) (t-s)     \notag\\
&+\frac{(1-\epsilon) (1+C_1)\iota_\epsilon  c_\infty}{\epsilon}  (t-s)^\epsilon  \int_{t_0}^s  (s-u)^{-\epsilon} du     \notag\\
=& \frac{(1-\epsilon)\iota_\epsilon \|\zeta\|}{\epsilon} (t-s)^\epsilon (s-t_0)^{-\epsilon}
+\iota_0 c_\infty  (1+C_1) (t-s)     \notag\\
&+\frac{ (1+C_1)\iota_\epsilon  c_\infty}{\epsilon}  (t-s)^\epsilon    (s-t_0)^{1-\epsilon}    \notag\\
\leq & C_2(\|\zeta\|) [(t-s) + (t-s)^\epsilon (s-t_0)^{-\epsilon} + (t-s)^\epsilon    (s-t_0)^{1-\epsilon}]     \label{Ton28}
\end{align}
for every $t_0 <s<t\leq t_0+\tau(\|\zeta\|) $. Here,  
 $C_2(\|\zeta\|)$ is some polynomial of first degree of $\|\zeta\|$ and is independent of $t_0$.

We now check the condition in Proposition \ref{THM5}--(ii). We have to show that for  $t_0\leq S\leq t_0+\tau(\|\zeta\|)$, 
\begin{align}    \label{Ton29}
\int_{t_0}^S \int_{t_0}^t (t-s)^{-1} \|\bar F(X(t))-\bar F(X(s))\| ds<\infty.
\end{align}
Indeed, for $t_0< s, t\leq t_0+\tau(\|\zeta\|)$, Lemma \ref{THM3},  \eqref{Ton21} and \eqref{Ton23} give
\begin{align}
& \|\bar F(X(t))-\bar F(X(s))\|    \notag\\
\leq & c_\infty [1+\|X(t)\|_{L_{\infty}}+\|X(s)\|_{L_{\infty}}]  \|X(t)-X(s)\|   \notag\\
\leq &c_\infty [1+C \|A^\eta X(t)\|+C\|A^\eta X(s)\|]  \|X(t)-X(s)\|   \notag\\
\leq &c_\infty [1+C C_1 (t-t_0)^{-\eta} +C C_1 (s-t_0)^{-\eta}]  \|X(t)-X(s)\|.   \label{Ton30}
\end{align}
By some simple calculations, it is easily seen that \eqref{Ton29} follows from  \eqref{Ton28} and \eqref{Ton30}.
We then conclude that $X$  is a local strong solution  on $[t_0,t_0+\tau(\|\zeta\|)]$ to  \eqref{Ton18}.

{\bf Step 6.}  Let us finally prove the estimates \eqref{Ton19} and  \eqref{Ton20}.  The first one is obvious due to \eqref{Ton21}.  For the second one, we have
\begin{align*}
AX(t)=& A e^{-(t-t_0)A} \zeta +\int_{t_0}^t A e^{-(t-u)A} \bar F(X(u)) du \notag\\
=&  A e^{-(t-t_0)A} \zeta +\int_{t_0}^t A e^{-(t-u)A} [\bar F(X(u))-\bar F(X(t))] du  \notag\\
&+\int_{t_0}^t A e^{-(t-u)A} du \bar F(X(t))     \notag \\
=&   A e^{-(t-t_0)A}  \zeta +\int_{t_0}^t A e^{-(t-u)A} [\bar F(X(u))-\bar F(X(t))] du    \\
&+[I-e^{-(t-t_0)A}]\bar F(X(t)), \hspace{1cm} t_0<t\leq t_0+\tau(\|\zeta\|).  \notag
\end{align*}
Using \eqref{Ton4}, \eqref{Ton5}, and Lemma \ref{THM3}, it follows that
\begin{align*}
\|AX(t)\| 
\leq &  \iota_1 (t-t_0)^{-1} \|\zeta\|  \\
& +\iota_1 c_\infty \int_{t_0}^t (t-u)^{-1}  [1+\|X(u)\|_{\infty} + \|X(t)\|_{\infty} ] \\
& \hspace{2cm} \times  \| X(t)-X(u)\| du    \\
&+c_\infty  (1+\iota_0) [1+\|X(t)\|], \hspace{1cm} t_0<t\leq t_0+\tau(\|\zeta\|).  \notag
\end{align*}
Hence,  \eqref{Ton20},  \eqref{Ton21}, and  \eqref{Ton23}  give
\begin{align*}
&\|AX(t)\| \\
\leq &  \iota_1 (t-t_0)^{-1} \|\zeta\|  \\
& +\iota_1 c_\infty \int_{t_0}^t (t-u)^{-1}  [1+C\|A^\eta X(u)\| + C\|A^\eta  X(t)\| ] \\
& \hspace{2cm} \times  \| X(t)-X(u)\| du    \\
&+c_\infty  (1+\iota_0) [2+(\iota_0+\iota_\eta)\|\zeta\|]\\
\leq &  \iota_1 (t-t_0)^{-1} \|\zeta\|  \\
& +\iota_1 c_\infty \int_{t_0}^t (t-u)^{-1}  [1+C\{1+(\iota_0+\iota_\eta)\|\zeta\|\}  \{(u-t_0)^{-\eta}+(t-t_0)^{-\eta}\} ] \\
& \hspace{2cm} \times  \| X(t)-X(u)\| du    \\
&+c_\infty  (1+\iota_0) [2+(\iota_0+\iota_\eta)\|\zeta\|], \hspace{1cm} t_0<t<t_0+\tau(\|\zeta\|).  \notag
\end{align*}
Thus, there exists a constant, still denoted by $C>0,$ depending only on the exponents such that for $t_0<t\leq t_0+\tau(\|\zeta\|),$ 
\begin{align}
\|AX(t)\|  
\leq  & C (1+\|\zeta\|) + C (t-t_0)^{-1} \|\zeta\|       \label{Ton31}\\
& +C (1+\|\zeta\|) \int_{t_0}^t (t-u)^{-1}    [1+(u-t_0)^{-\eta}+(t-t_0)^{-\eta}]   \notag\\
& \hspace{3cm} \times   \| X(t)-X(u)\| du.      \notag
\end{align}
By substituting \eqref{Ton28} (change $s$ by $u$) into \eqref{Ton31} and some simple calculations, there exists a real-valued positive continuous function $\chi$ on $[0,\infty)$, which is independent of $t_0$, such that 
\begin{equation*}  
\|A X(t)\| \leq \chi(\|\zeta\|)  [1+ (t-t_0)^{-1}], \hspace{1cm} t_0<t\leq  t_0+\tau(\|\zeta\|).
\end{equation*}
Thus, \eqref{Ton20} has been proved. The proof  of the theorem is complete.
\end{proof}

\begin{theorem}  \label{THM7}
The two components of  the local strong solution $X$ on $[t_0, t_0+\tau(\|\zeta\|)] $ in Theorem \ref{THM6}  are real-valued and nonnegative.
\end{theorem}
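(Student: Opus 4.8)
The plan is to treat the two assertions separately: real-valuedness follows for free from the uniqueness already proved, while nonnegativity comes from an $L_2$ energy estimate on the negative parts of the two components that exploits the absolute values built into $\bar F$.

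For real-valuedness I would argue by symmetry under complex conjugation. Let $\bar X$ denote the componentwise complex conjugate of the local mild solution $X$ of \eqref{Ton18}. Since $A=\mathrm{diag}\{A_1,A_2\}$ has real coefficients we have $\overline{e^{-tA}w}=e^{-tA}\bar w$, and $\bar F$ is compatible with conjugation in the sense $\overline{\bar F(X)}=\bar F(\bar X)$: the constants $a,b$ are real, while $|u|,|v|$ are unaffected and $\overline{u^2}=\bar u^2$. Conjugating the mild-solution formula of Definition~\ref{THM4} then shows that $\bar X$ is itself a mild solution of \eqref{Ton18} with the same initial value $\overline{\zeta}=\zeta$ (recall $\zeta\in V$ is real). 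By the uniqueness established in Step~4 of the proof of Theorem~\ref{THM6}, $\bar X=X$, so $X$ is real-valued throughout $[t_0,t_0+\tau(\|\zeta\|)]$.

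For nonnegativity I would work with the genuine strong solution on the open interval, where $X(t)=(u(t),v(t))\in\mathcal D(A)=H^2_N(\Omega)\times H^2_N(\Omega)$ and $X\in\mathcal C^1((t_0,t_0+\tau(\|\zeta\|)];E)$, so the abstract equation is literally the PDE system \eqref{Ton1}. Writing $u^-=\max\{-u,0\}$ for the negative part, I would test the $u$-equation against $-u^-$ in $L_2(\Omega)$ and integrate by parts using the Neumann condition \eqref{Ton2}, obtaining
\begin{align*}
\tfrac12\frac{d}{dt}\|u^-(t)\|_{L_2}^2
=&-\|\nabla u^-\|_{L_2}^2-\gamma\|u^-\|_{L_2}^2-\gamma a\int_\Omega u^-\,dx\\
&-\int_\Omega\frac{\gamma\rho\,|v|\,(u^-)^2}{1+|u|+ku^2}\,dx\;\le\;0 .
\end{align*}
Every term on the right is nonpositive: the diffusion and linear terms by construction, the source term because the supply rate $a\ge 0$, and the reaction term \emph{precisely} because the modification of $\bar F$ replaced $v$ by $|v|$ in the numerator, so that on $\{u<0\}$ the factor $u\,u^-=-(u^-)^2$ carries the favourable sign. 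Hence $\|u^-(t)\|_{L_2}^2$ is nonincreasing; since $u(t_0)=u_0\ge0$ gives $u^-(t_0)=0$ and $u^-\in\mathcal C([t_0,t_0+\tau(\|\zeta\|)];L_2)$, I conclude $u^-\equiv0$, i.e. $u\ge0$. The identical computation with $v^-$, using $b\ge0$ and the factor $|u|$ appearing in the second component of $\bar F$, yields $v\ge0$.

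The main obstacle is the rigorous justification of this differential inequality near the initial time, where only $L_2$-regularity of $\zeta$ is available. I would handle it by first deriving the inequality on $[t_0+\epsilon,t]$ for $\epsilon>0$, where $X$ is a classical strong solution, invoking the standard Stampacchia-type chain rule $\nabla u^-=-\nabla u\,\mathbf 1_{\{u<0\}}$ and differentiability of $t\mapsto\|u^-\|_{L_2}^2$ valid for $u\in\mathcal C^1$ with values in $\mathcal D(A)$; I would then let $\epsilon\to0$, using continuity of $X$ into $E$ together with continuity of $w\mapsto w^-$ on $L_2(\Omega)$ to transfer the bound down to $t_0$, where the negative part of the initial datum vanishes.
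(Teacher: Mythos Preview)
Your proposal is correct and essentially identical to the paper's proof. Real-valuedness is obtained in the paper by exactly the conjugation-plus-uniqueness argument you give; for nonnegativity the paper introduces the cutoff $H(x)=\tfrac12 x^2$ on $(-\infty,0)$, $H\equiv 0$ on $[0,\infty)$ and differentiates $G_u(t)=\int_\Omega H(u(t))\,dx$, which is nothing other than $\tfrac12\|u^-(t)\|_{L_2}^2$, and tests with $H'(u)=-u^-$ just as you do. The only cosmetic difference is that the paper cites a chain-rule lemma from \cite{yagi} to justify differentiability of $G_u$, whereas you invoke the Stampacchia identity and the $\epsilon\to0$ limit explicitly; your treatment of the initial time is, if anything, a bit more careful than the paper's.
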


\begin{proof}
It is clear that the function $\bar X$ defined by the complex conjugate of $ X$  is also a solution on $[t_0, t_0+\tau(\|\zeta\|)] $ of \eqref{Ton18} with the same initial value $\zeta$. The uniqueness of solution then implies that 
$$X(t)=\overline {X(t)}, \hspace{1cm} t_0\leq t\leq t_0+\tau(\|\zeta\|).$$
Hence, $X$ is real-valued.

In order to prove the nonnegativity of the two components of $X=
\left (\begin{matrix}
u\\
v
\end{matrix}\right)
$, we use a cutoff function $H$ given by
\begin{equation*}
H(x)=
\begin{cases}
\frac{1}{2} x^2, \hspace{1cm} -\infty<x<0,\\
0, \hspace{1cm}  0\leq x<\infty.
\end{cases}
\end{equation*} 
The function $H$ has the following property. 
For any $f\in \mathcal C^1((0,\infty);L_2(\Omega )),$ the function $G_f$ defined by
$$G_f(t)=\int_\Omega  H((f(t))dx, \hspace{1cm} 0<t<\infty,$$
has continuous derivative
$$G_f'(t)=\int_\Omega  H'(f(t)) f'(t) dx, \hspace{1cm} 0<t<\infty. $$
(See \cite{yagi} for the proof.) Thus, the function $G_u$ is continuously differentiable with the derivative
\begin{align*}
G_u'(t)=&\int_\Omega  H'(u) \Delta u dx  + \gamma \int_\Omega  H'(u) [ a-u-\frac{ \rho u|v|}{1+|u|+ku^2}]dx. 
\end{align*}
We have
\begin{align*}
\int_\Omega  H'(u) \Delta u dx =&-\int_\Omega  \nabla  H'(u) \cdot \nabla u dx   \\
=&-\int_\Omega  |\nabla u |^2 dx \leq 0.
\end{align*}
Furthermore, since $H'(x)\leq 0  $ and $H'(x)x \geq 0$ for every $x\in \mathbb R$, 
$$\int_\Omega  H'(u) [ a-u-\frac{ \rho u|v|}{1+|u|+ku^2}]dx  \leq 0.$$
Hence,
$$G_u'(t) \leq 0, \hspace{1cm} t_0\leq t\leq t_0+\tau(\|\zeta\|).$$
As a consequence,
$$G_u(t) \leq G_u(t_0), \hspace{1cm} t_0\leq t\leq t_0+\tau(\|\zeta\|). $$

Because $H$ is nonnegative  and $G_u(t_0)=0$, we obtain that
$$G_u(t)=\int_\Omega  H((u(t))dx =0 \hspace{1cm} \text{ for all }  t_0\leq t\leq t_0+\tau(\|\zeta\|).$$
Thus,
$$u(t) \geq 0 \hspace{1cm} \text{ for all }  t_0\leq t\leq \tau(\|\zeta\|).$$

Similarly, the function $G_v$ is continuously differentiable with the derivative
\begin{align*}
G_v'(t)=&\alpha  \int_\Omega  H'(v) \Delta v dx  + \gamma \int_\Omega  H'(v) [ \beta(b-v)-\frac{ \rho |u|v}{1+|u|+ku^2}]dx. 
\end{align*}
Using the same arguments as above, we obtain that
$$v(t) \geq 0 \hspace{1cm} \text{ for all }  t_0\leq t\leq t_0+\tau(\|\zeta\|).$$
The proof is complete.
\end{proof}

\begin{remark}
Thanks to Theorem \ref{THM7}, a strong solution of the system  \eqref{Ton11} is also a solution of the system \eqref{Ton1} coupled with the Neumann boundary condition \eqref{Ton2}.
\end{remark}

\section{Global solutions}  \label{global solutions}
In this section, we show that the system 
\eqref{Ton1} coupled with \eqref{Ton2}, or equivalently the system  \eqref{Ton11} possesses a unique global positive solution. For this purpose, we use a {\it priori} estimate for solutions.

\begin{theorem}[{\em priori} estimate] \label{THM8}
The  strong solution  $X$ in Theorem  \ref{THM6}  satisfies a  norm estimate
\begin{equation} \label{Ton32}
\|X(t)\| \leq C[e^{-\mu (t-t_0)} \|\zeta\| +1], \hspace{1cm} t_0<t\leq t_0+ \tau(\|\zeta\|)
\end{equation}
with some constants  $0<\mu, C<\infty$ independent of $t_0$ and $\zeta.$ As a consequence, 
\begin{equation} \label{Ton33}
\|X(t)\| \leq C(\|\zeta\| +1), \hspace{1cm} t_0\leq t\leq t_0+\tau(\|\zeta\|).
\end{equation}
\end{theorem}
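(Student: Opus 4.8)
The plan is to prove the estimate by a direct energy method on the PDE system \eqref{Ton1}, rather than through the mild-solution formula. Indeed, the mild representation combined only with the linear bound $\|\bar F(x)\|\le c_\infty(1+\|x\|)$ from Lemma \ref{THM3} yields, after Gronwall, at best a bound of the form $Ce^{(\iota_0 c_\infty-\nu)(t-t_0)}\|\zeta\|$, whose exponent has the wrong sign and cannot produce the dissipative factor $e^{-\mu(t-t_0)}$. The decay must instead come from the specific sign structure of the reaction terms, which is visible only at the level of $L_2$ inner products.

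First I would exploit the regularity and sign information already established. Since $X=(u,v)$ is a strong solution, $X(t)\in\mathcal D(A)$ and $X\in\mathcal C^1((t_0,\infty);E)$, so $u(t),v(t)\in H^2_N(\Omega)$ for $t>t_0$ and the maps $t\mapsto\|u(t)\|_{L_2}^2$ and $t\mapsto\|v(t)\|_{L_2}^2$ are differentiable; moreover Theorem \ref{THM7} gives $u,v\ge 0$. I would then test the first equation of \eqref{Ton1} with $u$ and the second with $v$, integrating over $\Omega$. Integration by parts using the Neumann condition \eqref{Ton2} makes the diffusion contributions $-\int_\Omega|\nabla u|^2\,dx\le 0$ and $-\alpha\int_\Omega|\nabla v|^2\,dx\le 0$, and nonnegativity of $u,v$ makes the substrate-inhibition cross terms $-\gamma\rho\int_\Omega \frac{u^2 v}{1+u+ku^2}\,dx$ and $-\gamma\rho\int_\Omega\frac{u v^2}{1+u+ku^2}\,dx$ nonpositive, so all three contributions may simply be discarded.

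What survives is controlled by the linear part. For $u$ I obtain $\frac12\frac{d}{dt}\|u\|_{L_2}^2\le \gamma a\int_\Omega u\,dx-\gamma\|u\|_{L_2}^2$, and Cauchy--Schwarz together with Young's inequality, $\gamma a\int_\Omega u\,dx\le\frac{\gamma}{2}\|u\|_{L_2}^2+\frac{\gamma a^2}{2}\mathrm{Vol}(\Omega)$, absorbs the supply term into the degradation term and leaves the scalar differential inequality $\frac{d}{dt}\|u\|_{L_2}^2\le -\gamma\|u\|_{L_2}^2+\gamma a^2\,\mathrm{Vol}(\Omega)$. The analogous computation for $v$ gives $\frac{d}{dt}\|v\|_{L_2}^2\le -\gamma\beta\|v\|_{L_2}^2+\gamma\beta b^2\,\mathrm{Vol}(\Omega)$. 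Applying the scalar Gronwall lemma to each (using $\|u(t_0)\|_{L_2}^2,\|v(t_0)\|_{L_2}^2\le\|\zeta\|^2$) yields $\|u(t)\|_{L_2}^2\le e^{-\gamma(t-t_0)}\|\zeta\|^2+a^2\,\mathrm{Vol}(\Omega)$ and $\|v(t)\|_{L_2}^2\le e^{-\gamma\beta(t-t_0)}\|\zeta\|^2+b^2\,\mathrm{Vol}(\Omega)$.

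Finally I would add the two bounds, set $\mu=\frac12\min\{\gamma,\gamma\beta\}$, and take square roots via $\sqrt{p+q}\le\sqrt p+\sqrt q$ to reach $\|X(t)\|\le C[e^{-\mu(t-t_0)}\|\zeta\|+1]$ with $C$ depending only on $\gamma,\beta,a,b$ and $\mathrm{Vol}(\Omega)$, hence on neither $t_0$ nor $\zeta$, as required; the consequence \eqref{Ton33} then follows by bounding $e^{-\mu(t-t_0)}\le 1$. The only genuinely delicate point is the justification of differentiating the $L_2$ norms and carrying out the integrations by parts, and this is precisely where the strong-solution regularity from Theorem \ref{THM6} and the nonnegativity from Theorem \ref{THM7} are indispensable; everything else is a routine dissipative energy estimate.
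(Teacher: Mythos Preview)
Your proof is correct and follows essentially the same energy-method approach as the paper: test each equation with its own unknown, discard the diffusion and reaction cross terms via the Neumann condition and nonnegativity, absorb the constant supply terms by Young's inequality, and integrate the resulting differential inequality. The only cosmetic difference is that the paper first adds the two inequalities and works with $\|u\|_{L_2}^2+\|v\|_{L_2}^2$ as a single quantity, whereas you treat $u$ and $v$ separately and combine at the end; both routes give the same decay rate $\mu=\tfrac12\min\{\gamma,\gamma\beta\}$.
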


\begin{proof}
We have $X=
\left (\begin{matrix}
u\\
v
\end{matrix}\right)
$, where $(u,v)$ satisfies  \eqref{Ton1} on $(t_0, t_0+ \tau(\|\zeta\|)]$ with the Neumann boundary condition and 
$\left (\begin{matrix}
u(t_0)\\
v(t_0)
\end{matrix}\right)
=
\zeta.$  
Consider the inner product of the two equations in \eqref{Ton1} and $u, v$ in 
$L_2(\Omega )$, respectively. From the equation on $u$, we have
\begin{align*}
\int_{\Omega } \frac{u\partial u }{\partial t}  dx=\int_{\Omega } u\Delta u  dx +\int_{\Omega } \gamma \Big(a-u-\frac{\rho uv}{1+u+ku^2}\Big) u  dx.
\end{align*}
Using the Neumann boundary condition \eqref{Ton2}, this equality implies that
\begin{align*}
\frac{1}{2} \frac{d}{dt} \int_{\Omega } u^2  dx +\int_{\Omega } |\nabla  u|^2  dx + \gamma \int_{\Omega } u^2 dx= \int_{\Omega } \gamma \Big(a-\frac{\rho uv}{1+u+ku^2}\Big) u  dx.
\end{align*}
Hence,
\begin{align}
\frac{1}{2} \frac{d}{dt} \int_{\Omega } u^2  dx  + \gamma \int_{\Omega } u^2 dx\leq  \int_{\Omega } \gamma \Big(a-\frac{\rho uv}{1+u+ku^2}\Big) u  dx.  \label{Ton34}
\end{align}

Similarly, from the equation on $v$, we have
\begin{align}
\frac{1}{2} \frac{d}{dt} \int_{\Omega } v^2  dx  + \gamma \beta\int_{\Omega } v^2 dx\leq  \int_{\Omega } \gamma \Big(\beta b-\frac{\rho uv}{1+u+ku^2}\Big) v  dx.  \label{Ton35}
\end{align}

Put $\gamma^*=\frac{\min\{\gamma, \gamma \beta\}}{2}$. Combining \eqref{Ton34} and \eqref{Ton35}, we obtain that
\begin{align}
&\frac{1}{2} \frac{d}{dt} \int_{\Omega } (u^2+v^2)  dx  + 2\gamma^*\int_{\Omega } (u^2+v^2) dx   \label{Ton36}\\
&\leq  \int_{\Omega } \gamma \Big(au+\beta b v-\frac{\rho u^2v}{1+u+ku^2}-\frac{\rho uv^2}{1+u+ku^2}\Big)   dx.   \notag
\end{align}
It is easily seen that there exists a constant $C>0$  such that for all $0\leq u,v<\infty$, 
$$\gamma \Big(au+\beta b v-\frac{\rho u^2v}{1+u+ku^2}-\frac{\rho uv^2}{1+u+ku^2}\Big)  \leq \gamma^* (u^2+v^2) +\frac{C}{\text {Vol}(\Omega)}.$$
We then observe from  \eqref{Ton36} that 
\begin{align*}
\frac{1}{2} \frac{d}{dt} \int_{\Omega } (u^2+v^2)  dx  + \gamma^*\int_{\Omega } (u^2+v^2) dx  
\leq  C, \hspace{0.5cm} t_0<t\leq t_0+\tau(\|\zeta\|).   
\end{align*}

Solving this differential inequality, we obtain that for $t_0<t\leq t_0+\tau(\|\zeta\|)$, 
\begin{align*}
&\int_{\Omega } (u^2+v^2)  dx \\
&\leq e^{-2\gamma^* (t-t_0)} \int_{\Omega } [u(t_0)^2+v(t_0)^2]  dx +C\int_{t_0}^t e^{-2\gamma^*(t-s)}ds \\
&=e^{-2\gamma^* (t-t_0)} \int_{\Omega } [u(t_0)^2+v(t_0)^2]  dx + \frac{C(1-e^{-2\gamma^* t)}}{2\gamma^*}.
\end{align*} 
Thus,
\begin{align*}
& \|u(t)\|_{L^2}^2+\|v(t)\|_{L^2}^2 \\
&\leq e^{-2\gamma^* (t-t_0)} [\|u(t_0)\|_{L^2}^2+\|v(t_0)\|_{L^2}^2]+\frac{C}{2\gamma^*},  \hspace{1cm} t_0<t\leq t_0+\tau(\|\zeta\|).
\end{align*} 
The estimates \eqref{Ton32} and \eqref{Ton33} then follow this estimate.  The theorem has been proved.
\end{proof}

Thanks to Theorems \ref{THM6} and \ref{THM7}, the system \eqref{Ton11} possesses a unique local positive strong solution $X$ on $[0, \tau(\|X_0\|)]$, where $\tau$ is the function defined in Theorem  \ref{THM6}.  We are now ready to show that $X$ is  defined globally.

\begin{theorem}  \label{THM9}
The solution $X$ of the system  \eqref{Ton11} 
can be prolonged to $[0,\infty)$. In other words,  \eqref{Ton11} possesses a unique global positive strong solution $X$  satisfying a  global norm estimate
\begin{equation}  \label{Ton37}  
\|X(t)\| \leq C(e^{-\mu t} \|X_0\| +1), \hspace{1cm} 0\leq t<\infty
\end{equation}
with some constant $0<\mu, C<\infty$ independent of $X_0$. Furthermore, for any $\frac{3}{4}<\eta<1$, 
\begin{equation}    \label{Ton38}
\|A^\eta X(t)\| \leq [1+(\iota_0+\iota_\eta)\|X_0\|]  t^{-\eta}, \hspace{1cm} 0<t<\infty,
\end{equation}
and
\begin{equation}  \label{Ton39}
\|A X(t)\| \leq \chi(\|X_0\|)  (1+ t^{-1}), \hspace{1cm} 0<t<  \infty, 
\end{equation}
where $\chi$ is some real-valued positive continuous function on $[0,\infty)$.  
\end{theorem}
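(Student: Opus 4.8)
The plan is to prolong the local solution from Theorem~\ref{THM6} to a global one by the standard continuation argument: one shows that the local solution cannot blow up in finite time because its $E$-norm stays bounded, the bound being furnished by the \emph{a priori} estimate of Theorem~\ref{THM8}. The crucial observation is that the lifespan $\tau(\|\zeta\|)$ depends only on $\|\zeta\|$ and is independent of the initial time $t_0$; thus if $\|X(t)\|$ remains bounded on a maximal interval of existence, we can restart the local existence theorem with a uniformly positive time step and contradict maximality.

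Concretely, I would first let $[0, T_{\max})$ denote the maximal interval on which the local strong solution $X$ exists, where $T_{\max} \le \infty$. Suppose for contradiction that $T_{\max} < \infty$. Applying Theorem~\ref{THM8} repeatedly (or directly, since the differential inequality for $\frac{d}{dt}\int_\Omega (u^2+v^2)\,dx$ derived there holds on the whole interval of existence) gives the global-in-time bound
\begin{equation*}
\|X(t)\| \le C(e^{-\mu t}\|X_0\| + 1), \qquad 0 \le t < T_{\max},
\end{equation*}
so in particular $\sup_{0\le t<T_{\max}} \|X(t)\| =: M < \infty$. Now fix any time $t_0 < T_{\max}$ with $T_{\max} - t_0 < \tau(M)$; since $\tau$ is continuous and positive and $\|X(t_0)\| \le M$, the continuity of $\tau$ and its dependence only on the norm yield a uniform lower bound $\tau(\|X(t_0)\|) \ge \tau_0 > 0$ for all such $t_0$ (one uses that $\tau$ is continuous on the compact range of norms $[0,M]$). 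Restarting Theorem~\ref{THM6} at time $t_0$ with initial value $\zeta = X(t_0) \in V$ — which is legitimate because $X(t_0)$ is nonnegative by Theorem~\ref{THM7} — produces a strong solution on $[t_0, t_0 + \tau_0]$, which agrees with $X$ by uniqueness (Step~4 of Theorem~\ref{THM6}) and extends it beyond $T_{\max}$, contradicting maximality. Hence $T_{\max} = \infty$.

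Once global existence is established, the estimate \eqref{Ton37} follows from \eqref{Ton32} by letting $t_0 = 0$ and noting the exponential factor $e^{-\mu t}$ persists on all of $[0,\infty)$. For the regularity estimates \eqref{Ton38} and \eqref{Ton39}, I would argue that \eqref{Ton19} and \eqref{Ton20} are \emph{local} bounds with the singular factor $(t-t_0)^{-\eta}$; to upgrade them to global bounds with the clean factor $t^{-\eta}$, the natural device is to exploit the semigroup/smoothing structure together with the uniform $E$-bound \eqref{Ton37}. For a fixed $t>0$, restart the equation at time $t_0 = t/2$ (say) and apply \eqref{Ton19}, \eqref{Ton20} with $\zeta = X(t/2)$, whose norm is controlled by \eqref{Ton37}; substituting $t - t_0 = t/2$ then converts the local singular factors into global ones, at the cost of absorbing constants into $\chi$.

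\textbf{The main obstacle} I anticipate is ensuring the uniform-in-$t_0$ lower bound on the lifespan near $T_{\max}$ is rigorously justified, and — more delicately — verifying that the restart procedure in the last paragraph yields the \emph{precise} constants stated in \eqref{Ton38}, namely $[1+(\iota_0+\iota_\eta)\|X_0\|]t^{-\eta}$ rather than a looser polynomial-in-$\|X_0\|$ bound. The stated coefficient matches \eqref{Ton19} exactly, which suggests that the intended argument does not restart at $t/2$ but instead applies \eqref{Ton19} directly with $t_0=0$ once global existence is known — the local estimate \eqref{Ton19} holding on $[0,\tau(\|X_0\|)]$ already has the correct form, and global existence simply removes the restriction $t \le \tau(\|X_0\|)$ by the continuation just described, with the $E$-bound guaranteeing the coefficient does not deteriorate. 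Reconciling the clean constant in \eqref{Ton38} with the global extension is therefore the point demanding the most care.
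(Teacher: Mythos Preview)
Your continuation argument is essentially the same as the paper's, with one cosmetic difference: you frame it as a maximal-interval-plus-contradiction argument, whereas the paper proceeds constructively by defining $\tau_1(r)=\min_{x\in B_r}\tau(x)$ and repeatedly extending the solution by the fixed step $\tfrac{1}{2}\tau_1(C(\|X_0\|+1))$, using the a~priori bound \eqref{Ton33} to keep each restart inside the ball $B_{C(\|X_0\|+1)}$. The two versions are logically equivalent; the paper's explicit fixed step makes the iteration slightly more concrete, while your contradiction phrasing is cleaner.

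On the regularity estimates, you have in fact been more scrupulous than the paper. The paper dispatches \eqref{Ton37}--\eqref{Ton39} in a single sentence, saying they follow from \eqref{Ton32}, \eqref{Ton19}, and \eqref{Ton20} respectively, without addressing the very issue you raise: how the local bound \eqref{Ton19} on $[0,\tau(\|X_0\|)]$ with coefficient $1+(\iota_0+\iota_\eta)\|X_0\|$ survives the extension with that \emph{same} coefficient for all $t>0$. Your diagnosis that this is the delicate point is correct, and your restart-at-$t/2$ device would yield a global bound, though with a coefficient depending on $C(\|X_0\|+1)$ rather than $\|X_0\|$ itself. The paper simply does not spell this out, so your proposal is at least as complete as the paper's own proof on this point.
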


\begin{proof}
For $0\leq r<\infty$, we define a ball in $E$:
$$B_r=\{ \xi \in V; \|\xi\|\leq r\}.$$
 and a function $\tau_1$ on $[0,\infty)$:
$$\tau_1(r)=\min_{x\in B_r} \tau (x).$$
Since $\tau$ is a positive continuous function on $[0,\infty)$,  the function $\tau_1$ is also positive.

Since  $X$ is the local strong solution of \eqref{Ton11} on $[0, \tau(\|X_0\|)],$  we have 
$$\tau(\|X_0\|) \geq \tau_1(C(\|X_0\|+1))$$
due to   \eqref{Ton33}.

Let us consider  the Cauchy problem  \eqref{THM6} with
$$t_0=\tau(\|X_0\|)-\frac{\tau_1(C(\|X_0\|+1))}{2}, \quad \zeta=X(t_0).$$ 
Theorem  \ref{THM6} then provides that  \eqref{THM6} possesses a local solution, say  $\bar X,$ on an interval $[t_0, t_0+\tau(\|X(t_0)\|)]$. 
 Furthermore, by Theorem \ref{THM8}, $\bar X$ satisfies \eqref{Ton33} on that interval.

Since $ \zeta=X(t_0)$, it belongs to $B_{C(\|X_0\|+1)}$ due to   \eqref{Ton33}. 
Hence, by the definition of the function $\tau_1$, 
$$[t_0, t_0+\tau_1(C(\|X_0\|+1))] \subset [t_0, t_0+\tau(\|X(t_0)\|)].$$
This shows that $\bar X$ is well-defined on $[t_0, t_0+\tau_1(C(\|X_0\|+1))]$. 
Note that $X$ is defined on $[0, \tau(\|X_0\|)]$, and therefore on $[t_0, t_0+\frac{1}{2}\tau_1(C(\|X_0\|+1))].$
 The uniqueness of solutions then implies that
$$X(t)=\bar X(t), \hspace{1cm} t_0\leq t\leq  \tau(\|X_0\|)=t_0+\frac{1}{2}\tau_1(C(\|X_0\|+1)).$$
This means that we have constructed a local solution, still denoted by $X$,  to \eqref{Ton11} on 
$[0, t_0+\tau_1(C(\|X_0\|+1))]=[0, \tau(\|X_0\|)+\frac{1}{2}\tau_1(C(\|X_0\|+1))].$

Thanks to the {\em priori} estimate  \eqref{Ton33}, this procedure can be continued infinitely. Each time the local solution is extended over the fixed length $\frac{1}{2}\tau_1(C(\|X_0\|+1))$ of interval. Thus, the solution $X$ is prolonged to $[0,\infty)$. 
 In addition, the estimates \eqref{Ton37},   \eqref{Ton38}, and  \eqref{Ton39}    follow  from \eqref{Ton32}, \eqref{Ton19},  and  \eqref{Ton20}, respectively.
The proof is  complete. 
\end{proof}

\section{Regular dependence on initial data}  \label{initial data}
In this section, we show that solutions of \eqref{Ton1} are continuously dependent on initial values.

On account of Theorem \ref{THM8}, for any initial value  $X_0=\left(
\begin{matrix} 
u_0\\
v_0
\end{matrix}\right) \in V,$ 
where $V$ is defined by \eqref{Ton7},  the system  \eqref{Ton1} coupled with the Neumann condition \eqref{Ton2} possesses a unique global nonnegative solution  $X(\cdot,X_0)=\left(
\begin{matrix} 
u\\
v
\end{matrix}\right) \in E.$ 
The following theorem shows that when $X_0$ is ``close'' to $Y_0$, so is $X(t,X_0)$ to $X(t,Y_0)$ at time $t$.

\begin{theorem}  \label{THM10}
For any $\frac{3}{4}<\eta<1$, there exists $0<c_\eta<\infty$ such that 
\begin{equation*}  
\|X(t,X_0)-X(t,Y_0)\| \leq \iota_0 \|X_0-Y_0\| e^{ c_\eta[t+\frac{\|X_0\|+\|Y_0\|}{1-\eta}  t^{1-\eta}]}
\end{equation*}
for all $0\leq t<\infty$ and $ X_0, Y_0\in V.$ 
\end{theorem}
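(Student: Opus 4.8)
The plan is to estimate the difference $W(t) = X(t,X_0)-X(t,Y_0)$ via the mild-solution integral equation and close the estimate with a Gronwall-type argument, exactly as the uniqueness proof in Step 4 of Theorem~\ref{THM6} did locally. Writing both solutions through the variation-of-constants formula and subtracting, the linear semigroup terms cancel against each other except for the initial data, giving
\begin{equation*}
W(t) = e^{-tA}(X_0-Y_0) + \int_0^t e^{-(t-s)A}\,[\bar F(X(s,X_0))-\bar F(X(s,Y_0))]\,ds.
\end{equation*}
Applying $\|e^{-tA}\|\le \iota_0$ from \eqref{Ton5} to the first term yields the prefactor $\iota_0\|X_0-Y_0\|$ that appears in the claimed bound.

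The crux is controlling the nonlinear term. First I would use the Lipschitz estimate of Lemma~\ref{THM3}, namely $\|\bar F(x)-\bar F(y)\|\le c_\infty(1+\|x\|_\infty+\|y\|_\infty)\|x-y\|$, to obtain
\begin{equation*}
\|W(t)\| \le \iota_0\|X_0-Y_0\| + \iota_0 c_\infty \int_0^t [1+\|X(s,X_0)\|_\infty+\|X(s,Y_0)\|_\infty]\,\|W(s)\|\,ds.
\end{equation*}
The difficulty is that the $L_\infty$-norms are not controlled by the $E$-norm directly; here I would invoke the embedding $H^{2\eta}(\Omega)\subset \mathcal C(\Omega)$ for $\tfrac34<\eta<1$ together with the smoothing estimate \eqref{Ton38}, which gives $\|X(s,X_0)\|_\infty \le C\|A^\eta X(s,X_0)\|\le C[1+(\iota_0+\iota_\eta)\|X_0\|]\,s^{-\eta}$, and similarly for $Y_0$. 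Substituting these bounds, the integral kernel acquires the form $1 + (\text{const})(\|X_0\|+\|Y_0\|)\,s^{-\eta}$, where the singularity $s^{-\eta}$ is integrable precisely because $\eta<1$.

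Finally I would apply the generalized (singular-kernel) Gronwall inequality to
\begin{equation*}
\|W(t)\| \le \iota_0\|X_0-Y_0\| + \int_0^t c_\eta[1+(\|X_0\|+\|Y_0\|)\,s^{-\eta}]\,\|W(s)\|\,ds,
\end{equation*}
whose solution is bounded by $\iota_0\|X_0-Y_0\|\exp\!\big(\int_0^t c_\eta[1+(\|X_0\|+\|Y_0\|)s^{-\eta}]\,ds\big)$. Evaluating the exponent, $\int_0^t 1\,ds = t$ and $\int_0^t s^{-\eta}\,ds = \tfrac{t^{1-\eta}}{1-\eta}$, which assembles into exactly the exponent $c_\eta\big[t+\tfrac{\|X_0\|+\|Y_0\|}{1-\eta}t^{1-\eta}\big]$ in the statement, after absorbing constants into $c_\eta$. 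The main obstacle is therefore purely the singular integrability at $s=0$: one must be careful that the $s^{-\eta}$ weight coming from the parabolic smoothing estimate is integrable and that the Gronwall argument tolerates this integrable singularity, rather than requiring a bounded kernel. Since $\eta$ can be chosen in $(\tfrac34,1)$, integrability is guaranteed, and the argument goes through for all $t\in[0,\infty)$ because the global estimates \eqref{Ton37}--\eqref{Ton38} hold on the whole half-line.
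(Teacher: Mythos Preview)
Your proposal is correct and follows essentially the same argument as the paper: subtract the two mild-solution formulas, bound the semigroup by $\iota_0$ via \eqref{Ton5}, apply the Lipschitz estimate of Lemma~\ref{THM3}, control the $L_\infty$-norms through the embedding \eqref{Ton23} and the global smoothing bound \eqref{Ton38}, and finish with Gronwall on the integrable kernel $1+(\|X_0\|+\|Y_0\|)s^{-\eta}$. The paper carries out exactly these steps in the same order, so there is nothing to add.
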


\begin{proof}
Let $ X_0, Y_0\in V$. Since
\begin{equation*}
X(t,\cdot)=e^{-tA} (\cdot) +\int_0^t e^{-(t-s)A}\bar F(X(s,\cdot))ds, \hspace{1cm} 0\leq t<\infty,
\end{equation*}
we have
\begin{align*}
& \|X(t,X_0)-X(t,Y_0)\|  \\
\leq & \|e^{-tA}\| \|X_0-Y_0\| +\int_0^t \|e^{-(t-s)A}\|  \| \bar F(X(s,X_0))-\bar F(X(s,Y_0))\|ds.
\end{align*}
Thank to \eqref{Ton5}, Lemma \ref{THM3},  and Theorem \ref{THM8}, it follows that 
\begin{align*}
& \|X(t,X_0)-X(t,Y_0)\|  \\
\leq & \iota_0 \|X_0-Y_0\| + \int_0^t  c_\infty[1+\|X(s,X_0)\|_{\infty}+\|X(s,Y_0)\|_{\infty}]  \\
& \hspace{2.5cm} \times   \| X(s,X_0)-X(s,Y_0)\|ds, \hspace{1cm} 0\leq t<\infty.
\end{align*}
In view of \eqref{Ton23} and  \eqref{Ton38}, for any $\frac{3}{4}<\eta<1$, there exist $0<c_\eta, c_{1\eta}<\infty$ such that
\begin{align*}
& \|X(t,X_0)-X(t,Y_0)\|  \\
\leq & \iota_0 \|X_0-Y_0\| + \int_0^t  c_{1\eta}[1+\|A^\eta X(s,X_0)\|+\|A^\eta X(s,Y_0)\|]  \\
& \hspace{2.5cm} \times   \| X(s,X_0)-X(s,Y_0)\|ds\\
\leq & \iota_0 \|X_0-Y_0\| + \int_0^t  c_\eta[1+(\|X_0\|+\|Y_0\|) s^{-\eta}]  \\
& \hspace{2.5cm} \times   \| X(s,X_0)-X(s,Y_0)\|ds,  \hspace{1cm} 0\leq t<\infty.
\end{align*}
The  Gronwall inequality then provides that
\begin{align*}
\|X(t,X_0)-X(t,Y_0)\|  
\leq & \iota_0 \|X_0-Y_0\| e^{\int_0^t c_\eta[1+(\|X_0\|+\|Y_0\|) s^{-\eta}]ds}  \\
=  & \iota_0 \|X_0-Y_0\| e^{ c_\eta[t+\frac{\|X_0\|+\|Y_0\|}{1-\eta}  t^{1-\eta}]},  \hspace{1cm} 0\leq t<\infty.
\end{align*}
We thus complete the proof.
\end{proof}

\section{Dynamical system}  \label{Dynamical system}
In this section, we construct a dynamical system for the coat model \eqref{Ton1}. Furthermore, we show that the dynamical system enjoys an exponential attractor having finite fractal dimension.

Let $X(t,X_0)$ be the solution of \eqref{Ton11}.
By setting 
$$S(t)X_0=X(t,X_0), \hspace{1cm} X_0\in V,$$
we define a nonlinear   semigroup $S$ acting on $V$. By the continuity  of  solutions in time as well as  Theorem \ref{THM10}, the semigroup is continuous from $[0,\infty)\times V$ to $V$.  Thus, the equation  \eqref{Ton11}  determines a continuous dynamical system $(S,V,E)$.

For the construction of an exponential attractor, we start with the following 
 proposition.

\begin{proposition}  \label{THM11}
There exists a constant $0<\varrho<\infty$ such that for all bounded set $B$ of $V,$ there is a time $t_B$ depending on $B$ such that
\begin{equation*}  \label{}
\sup_{X_0\in B} \sup_{t\geq t_B}  \|A S(t)X_0\| \leq \varrho.
\end{equation*}
\end{proposition}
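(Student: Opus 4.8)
The plan is to combine two facts already established for the flow: the exponential $E$\,{-}\,norm bound coming from the \emph{a priori} estimate \eqref{Ton37}, and the local smoothing estimate \eqref{Ton20}. The conceptual content is that \eqref{Ton37} provides a bounded \emph{absorbing set in $E$} whose radius is universal, while \eqref{Ton20} is a \emph{smoothing statement}; composing the two — by restarting the flow once it has entered the absorbing ball and then running it for a fixed short time — upgrades absorption in $E$ to absorption in $\mathcal D(A)$. The reason the resulting $\mathcal D(A)$\,{-}\,bound is uniform in $t$ is that I always invoke \eqref{Ton20} after the \emph{same} fixed elapsed time, so the singular factor $(t-t_0)^{-1}$ never blows up.

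First I would fix a bounded set $B\subset V$ and set $R=\sup_{X_0\in B}\|X_0\|<\infty$. By \eqref{Ton37} there are constants $C,\mu>0$ independent of $X_0$ with $\|X(t)\|\leq C(e^{-\mu t}R+1)$, so choosing $t_1=t_1(R)$ so large that $e^{-\mu t_1}R\leq 1$ yields
$$\|S(t)X_0\|=\|X(t)\|\leq 2C=:\varrho_0,\qquad t\geq t_1,\ X_0\in B,$$
where $\varrho_0$ does not depend on $B$. Next I would record the two universal constants governing the restart: since $\tau$ is continuous and strictly positive, $\tau_*:=\min_{0\leq r\leq\varrho_0}\tau(r)>0$, and since $\chi$ is continuous, $M:=\max_{0\leq r\leq\varrho_0}\chi(r)<\infty$.

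The key step is the restart argument. I would fix $t\geq t_B:=t_1+\tfrac{\tau_*}{2}$ and put $s=t-\tfrac{\tau_*}{2}\geq t_1$, so that $\zeta:=X(s)$ satisfies $\|\zeta\|\leq\varrho_0$. By uniqueness of solutions (Theorem \ref{THM6}) together with the semigroup property, the restriction of $X$ to $[s,\infty)$ coincides with the solution of \eqref{Ton18} issued from $\zeta$ at time $s$. Because $\|\zeta\|\leq\varrho_0$ we have $\tau(\|\zeta\|)\geq\tau_*$, hence $t-s=\tfrac{\tau_*}{2}\leq\tau(\|\zeta\|)$ lies in the interval on which \eqref{Ton20} holds, and that estimate gives
$$\|AS(t)X_0\|=\|AX(t)\|\leq\chi(\|\zeta\|)\bigl(1+(t-s)^{-1}\bigr)\leq M\Bigl(1+\tfrac{2}{\tau_*}\Bigr)=:\varrho.$$
Since $\varrho_0$, $\tau_*$, $M$, and therefore $\varrho$ depend on none of $B$, $X_0$, or $t$, whereas $t_B$ depends on $B$ only through $R$, taking the supremum over $X_0\in B$ and $t\geq t_B$ gives the proposition.

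The main obstacle is purely one of bookkeeping: one must check that the three constants $\varrho_0,\tau_*,M$ are genuinely uniform — the first because $C,\mu$ in \eqref{Ton37} are $X_0$\,{-}\,independent, the other two because continuous functions are bounded on the compact interval $[0,\varrho_0]$ — and that the fixed elapsed time $\tfrac{\tau_*}{2}$ used for the restart never exceeds the guaranteed lifespan $\tau(\|\zeta\|)$, which is exactly why $\tau_*$ is defined as a minimum over the ball of radius $\varrho_0$.
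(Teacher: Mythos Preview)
Your proof is correct and follows essentially the same route as the paper's: first use the a priori estimate \eqref{Ton37} to enter a universal $E$-ball, then restart the flow from a point in that ball and apply the local smoothing estimate \eqref{Ton20} after a fixed elapsed time, with the continuity of $\tau$ and $\chi$ on $[0,\varrho_0]$ guaranteeing that all constants are $B$-independent. The only cosmetic difference is that the paper restarts with elapsed time equal to $\tau_*$ itself rather than $\tau_*/2$; your choice of $\tau_*/2$ is equally valid and arguably slightly safer since \eqref{Ton20} is stated on the half-open interval $(t_0,t_0+\tau(\|\zeta\|)]$.
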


\begin{proof}
Let $B$ be a bounded set  of $V$.  By  \eqref{Ton37} of Theorem \ref{THM9}, there exist $\varrho_1$ independent of $B$ and a time $t_B^*$ depending on $B$ such that
\begin{equation}  \label{Ton40}
\sup_{X_0\in B}\sup_{t\geq t_B^*}\|S(t)X_0\| \leq \varrho_1.
\end{equation}

Let $t_B^*\leq t_0<\infty$. Consider the problem \eqref{Ton18} with an initial value $\zeta=S(t_0)X_0$, $X_0\in B$. By \eqref{Ton20} of Theorem \ref{THM6}, we have for every $t_0<t\leq  t_0+\tau(\|S(t_0)X_0\|),$
\begin{equation}  \label{Ton41}
\|A S(t)S(t_0)X_0\| \leq \chi(\|S(t_0)X_0\|)  [1+ (t-t_0)^{-1}].
\end{equation}

Put
$$\kappa_1=\min_{0\leq x\leq \varrho_1} \tau(x) \quad \text{ 
and  }  \quad  \kappa_2=\max_{0\leq x\leq \varrho_1} \chi(x)\in (0,\infty).$$
   Then, $ \kappa_1$ and $\kappa_2$ are positive  because  $\tau$ and $\chi$ are   continuous positive functions on $[0,\infty)$. In addition, 
 these constants  are independent of $B$.
We therefore observe from \eqref{Ton40} and  \eqref{Ton41}  that  for all $ t_B^*\leq t_0<\infty $ and $ t_0<t\leq  t_0+\kappa_1$, 
\begin{equation*}  
\|A S(t)S(t_0)X_0\| \leq \kappa_2  [1+ (t-t_0)^{-1}].
\end{equation*}

We  apply this with $t_0=t-\kappa_1$. Then, for all $t_B^*+ \kappa_1\leq t<\infty$, 
$$\|A S(t)S(t-\kappa_1)X_0\| \leq \kappa_2  (1+ \kappa_1^{-1}).$$
Since $S(t)S(t-\kappa_1)X_0=S(2t-\kappa_1)X_0$, this inequality means that
$$\|A S(t)X_0\| \leq \kappa_2  (1+ \kappa_1^{-1}), \hspace{1cm} 2t_B^*+\kappa_1\leq t<\infty.$$
Thus,
$$\sup_{X_0\in B} \sup_{t\geq 2t_B^*+\kappa_1}  \|A S(t)X_0\| \leq \kappa_2  (1+ \kappa_1^{-1}).$$
The proof is therefore complete.
\end{proof}

From Proposition \ref{THM11}, we can show existence of   an absorbing  set for the dynamical system $(S,V,E)$.

\begin{theorem}  \label{THM12}
The  closed ball $\mathcal B$ in $\mathcal D(A)$ defined by 
\begin{equation*}
\mathcal B=\{x\in \mathcal D(A); \|Ax\| \leq  \varrho\},
\end{equation*}
where $\varrho$ is the constant in Proposition \ref{THM11}, is a compact and absorbing set for $(S,V,E)$.
\end{theorem}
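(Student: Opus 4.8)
The plan is to establish two separate facts about $\mathcal{B}$: that it is absorbing and that it is compact in $E$. The absorbing property follows almost immediately from Proposition \ref{THM11}, while compactness relies on the compact embedding $\mathcal{D}(A)\hookrightarrow E$ coming from the Sobolev structure recorded in \eqref{Ton8}.

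First I would verify that $\mathcal{B}$ is absorbing. Let $B$ be any bounded set of $V$. By Proposition \ref{THM11} there exist a constant $0<\varrho<\infty$, independent of $B$, and a time $t_B$ depending on $B$ such that
$$\sup_{X_0\in B}\sup_{t\geq t_B}\|A S(t)X_0\|\leq \varrho.$$
In particular, for every $X_0\in B$ and every $t\geq t_B$ we have $S(t)X_0\in\mathcal{D}(A)$ with $\|A S(t)X_0\|\leq\varrho$, i.e. $S(t)X_0\in\mathcal{B}$. Hence $S(t)B\subset\mathcal{B}$ for all $t\geq t_B$, which is exactly the definition of $\mathcal{B}$ absorbing every bounded set of $V$. (One should also note $\mathcal{B}\subset V$: the solutions are nonnegative by Theorem \ref{THM7}, so the absorbed trajectories indeed land in the phase space.)

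Next I would prove that $\mathcal{B}$ is compact in $E$. The key point is that $\mathcal{B}$ is a bounded set of $\mathcal{D}(A)$, since $\|x\|\leq\|A^{-1}\|\,\|Ax\|\leq\|A^{-1}\|\varrho$ shows $\mathcal{B}$ is bounded in the graph norm of $A$. By \eqref{Ton8} with $\theta=1$ we have $\mathcal{D}(A)=H^2_N(\Omega)\times H^2_N(\Omega)$, and the embedding $H^2(\Omega)\hookrightarrow L_2(\Omega)$ is compact by the Rellich--Kondrachov theorem on the bounded domain $\Omega$. Therefore the bounded set $\mathcal{B}$ of $\mathcal{D}(A)$ is precompact in $E$. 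Since $\mathcal{B}$ is also closed in $E$ (if $x_n\in\mathcal{B}$ converges to $x$ in $E$, then by boundedness in $\mathcal{D}(A)$ and weak compactness together with the closedness of $A$ one gets $x\in\mathcal{D}(A)$ with $\|Ax\|\leq\varrho$), it is compact in $E$.

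The main obstacle, such as it is, lies not in the compactness machinery but in the bookkeeping needed to confirm that $\mathcal{B}$ lies in the phase space $V$ and that the closedness argument respects the constraint $\|Ax\|\leq\varrho$ under $E$-convergence; closedness in $E$ of a ball defined by an unbounded operator requires passing to a weakly convergent subsequence of $\{A x_n\}$ and invoking the closedness of $A$ rather than a naive limit. Once these points are handled, the conclusion that $\mathcal{B}$ is a compact absorbing set for $(S,V,E)$ follows directly.
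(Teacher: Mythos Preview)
Your proof is correct and follows essentially the same route as the paper: absorption comes directly from Proposition \ref{THM11}, and compactness in $E$ from the compact embedding $\mathcal D(A)=H^2_N(\Omega)\times H^2_N(\Omega)\hookrightarrow E$ (the paper simply cites \cite{triebel} and \cite{yagi} for the embedding and the passage from a bounded closed ball to a compact set). One small quibble: your parenthetical that $\mathcal B\subset V$ does not follow from nonnegativity of trajectories---$\mathcal B$ as defined contains elements of $\mathcal D(A)$ with no sign constraint---but this does not affect either the absorbing or the compactness argument, and the paper itself does not address the point.
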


\begin{proof}
The proof is obvious due to some well-known results.  Indeed, the absorbing property of  $\mathcal B$ follows from Proposition \ref{THM11}. Furthermore, since $\mathcal D(A)=H^2_N(\Omega) \times H^2_N(\Omega )$ (see \eqref{Ton8}), $\mathcal D(A)$  is compactly embedded in $E$ (see \cite[Theorem 4.10.1]{triebel}).
Thus, $\mathcal B$  is a compact set of $E$ (see \cite[Proposition 6.4]{yagi}).
\end{proof}

From the absorbing set in Theorem \ref{THM12}, we can construct an absorbing, compact, and invariant set for $(S, V, E).$ Indeed, since the  closed ball $\mathcal B$ is an absorbing set,   there exists a time $t_{\mathcal B}$ such that 
$$S(t)\mathcal B \subset \mathcal B, \hspace{1cm} t_{\mathcal B}\leq t<\infty.$$
We then put
  \begin{equation}  \label{Ton42} 
\mathcal V=\overline{\cup_{t_{\mathcal B}\leq t<\infty} S(t) \mathcal B} \subset \mathcal B \hspace{1cm} (\text{closure in } E).
\end{equation}

\begin{theorem} 
The set $\mathcal V$ defined by \eqref{Ton42} is an absorbing, compact, and invariant set of $(S, V, E).$
\end{theorem}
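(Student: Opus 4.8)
The plan is to verify the three defining properties of $\mathcal V$ one at a time, drawing on the facts already established: that $\mathcal B$ is compact and absorbing (Theorem \ref{THM12}), that $S(t)\mathcal B\subset\mathcal B$ for $t\geq t_{\mathcal B}$, and that the semigroup $S$ is continuous (as recorded at the start of Section \ref{Dynamical system}, via Theorem \ref{THM10}). I would also note at the outset that the union $\cup_{t_{\mathcal B}\leq s<\infty}S(s)\mathcal B$ already sits inside $\mathcal B$, so $\mathcal V\subset\mathcal B$ as stated in \eqref{Ton42}.

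Compactness is the quickest step. By construction $\mathcal V$ is a closed subset of $E$, and the inclusion $\mathcal V\subset\mathcal B$ places it inside the compact set $\mathcal B$. Since a closed subset of a compact set is compact, $\mathcal V$ is compact in $E$.

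For the absorbing property I would take an arbitrary bounded set $B\subset V$. Because $\mathcal B$ absorbs bounded sets of $V$, there is a time $t_B^*$ with $S(t)B\subset\mathcal B$ for all $t\geq t_B^*$. Then for $t\geq t_B^*+t_{\mathcal B}$ the semigroup law gives $S(t)B=S(t_{\mathcal B})S(t-t_{\mathcal B})B\subset S(t_{\mathcal B})\mathcal B$, and the right-hand side lies in $\cup_{t_{\mathcal B}\leq s<\infty}S(s)\mathcal B\subset\mathcal V$. Hence $S(t)B\subset\mathcal V$ for all $t$ beyond $t_B^*+t_{\mathcal B}$, which is exactly the absorption requirement.

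Invariance is the step I expect to be the main obstacle, although the difficulty is mild. On the un-closed union the semigroup property does the work directly: if $x=S(s)y$ with $s\geq t_{\mathcal B}$ and $y\in\mathcal B$, then $S(t)x=S(t+s)y$ with $t+s\geq t_{\mathcal B}$, so $S(t)x\in\cup_{t_{\mathcal B}\leq\sigma<\infty}S(\sigma)\mathcal B\subset\mathcal V$. To pass to the closure I would invoke the continuity of each $S(t)$ on $V$ together with the elementary fact that a continuous map carries the closure of a set into the closure of its image, yielding
$$S(t)\mathcal V\subset\overline{S(t)\Big(\cup_{t_{\mathcal B}\leq s<\infty}S(s)\mathcal B\Big)}\subset\overline{\mathcal V}=\mathcal V.$$
The point requiring care is that $S(t)$ must be defined and continuous on all of $\mathcal V$ and map it back into $V$; this is fine because $\mathcal V\subset\mathcal B\subset V$, so the continuous dynamical system $(S,V,E)$ applies on $\mathcal V$ without modification, and $\mathcal V$ being closed in $E$ is what lets the final closure collapse back onto $\mathcal V$.
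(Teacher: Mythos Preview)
Your proof is correct and follows the same approach as the paper: compactness via $\mathcal V$ being a closed subset of the compact $\mathcal B$, invariance via continuity of $S(t)$ together with the semigroup identity $S(t)S(s)\mathcal B=S(t+s)\mathcal B$, and absorption inherited from $\mathcal B$. Your treatment of the absorbing property is in fact more explicit than the paper's, which simply asserts that since $\mathcal B$ is absorbing and compact so is $\mathcal V$, but the underlying idea is the same.
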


\begin{proof}
Since $\mathcal B$  is an absorbing and compact set of $E$, so is $\mathcal V$. In addition, we have
\begin{align*}
S(t) \mathcal V=&S(t) \overline{\cup_{t_{\mathcal B}\leq r<\infty} S(r) \mathcal B}   \\
\subset & \overline{\cup_{t_{\mathcal B}\leq r<\infty} S(t)S(r) \mathcal B} \subset \mathcal V, \hspace{1cm} 0\leq t<\infty.
\end{align*}
This means that the set $\mathcal V$ is    invariant. 
\end{proof}

In this way, we observe that the behavior of the dynamical system $(S,V,E)$ is reduced to that of a dynamical system $(S, \mathcal V, E)$.
Let us now show that the dynamical $(S, \mathcal V, E)$ enjoys an exponential attractor having  finite fractal dimension. For this purpose, we want to use Theorem \ref{THM2}. 

First, let us show the squeezing property for $S(t)$ (see the paragraph before Theorem \ref{THM2} for the definition of squeezing property).

\begin{proposition}   \label{THM13}
For every  $0<t_0<\infty,$  the operator  $S(t_0)$  has the squeezing property with some $0<\delta<\frac{1}{4}$  and an orthogonal operator $P$ of finite rank $N$. 
\end{proposition}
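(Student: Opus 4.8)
The plan is to exploit the self-adjointness of $A$ together with the invariance of the phase space $\mathcal V$ defined in \eqref{Ton42}. Since $A=\mathrm{diag}\{A_1,A_2\}$ is a positive definite self-adjoint operator of $E$ whose domain $\mathcal D(A)$ is compactly embedded in $E$, it has compact resolvent; hence its spectrum consists of a nondecreasing sequence of positive eigenvalues $0<\lambda_1\le\lambda_2\le\cdots\to\infty$ with an associated orthonormal basis of eigenvectors of $E$. I would let $P=P_N$ be the orthogonal projection onto the span of the first $N$ eigenvectors and $Q=I-P$. Because $P$ and $Q$ are spectral projections, they commute with $e^{-tA}$, and on the range of $Q$ the self-adjointness yields the sharp decay $\|e^{-tA}Q\|\le e^{-\lambda_{N+1}t}$ for $0\le t<\infty$. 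The integer $N$ will be chosen at the very end.

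Fix $t_0>0$ and $x,y\in\mathcal V$, and write $X(t)=S(t)x$, $Y(t)=S(t)y$, $W(t)=X(t)-Y(t)$. Since $\mathcal V$ is invariant, $X(s),Y(s)\in\mathcal V$ for all $s\ge 0$; as $\mathcal V\subset\mathcal B$ is bounded in $\mathcal D(A)=H^2_N(\Omega)\times H^2_N(\Omega)$ and $H^2(\Omega)\hookrightarrow L_\infty(\Omega)$ for $d\le 3$, there is a constant $M$, depending only on $\varrho$ from Proposition \ref{THM11}, with $\|X(s)\|_{\infty}+\|Y(s)\|_{\infty}\le M$ for all $s\ge 0$. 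Consequently Lemma \ref{THM3} supplies a uniform Lipschitz bound $\|\bar F(X(s))-\bar F(Y(s))\|\le L\|W(s)\|$ with $L=c_\infty(1+M)$, and Theorem \ref{THM10} (or a direct Gronwall argument using \eqref{Ton5} and Lemma \ref{THM3}) gives $\|W(s)\|\le C_0\|W(0)\|$ for $0\le s\le t_0$, where $C_0$ depends only on $t_0$ and $\varrho$.

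Applying $Q$ to the mild formulation of $W$ and using $\|e^{-tA}Q\|\le e^{-\lambda_{N+1}t}$, I would then estimate
$$\|QW(t_0)\| \le e^{-\lambda_{N+1}t_0}\|W(0)\| + L\int_0^{t_0} e^{-\lambda_{N+1}(t_0-s)}\|W(s)\|\,ds \le \Lambda_N\|W(0)\|,$$
where $\Lambda_N=e^{-\lambda_{N+1}t_0}+\dfrac{LC_0}{\lambda_{N+1}}$. Since $\lambda_{N+1}\to\infty$, we have $\Lambda_N\to 0$. The proof is completed by a dichotomy: fix any $0<\delta<\tfrac14$ and choose $N$ so large that $\Lambda_N\le \delta/\sqrt2$. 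Suppose the first squeezing alternative $\|W(t_0)\|\le\delta\|W(0)\|$ fails, so that $\|W(t_0)\|>\delta\|W(0)\|$. If the second alternative $\|QW(t_0)\|\le\|PW(t_0)\|$ also failed, then $\|QW(t_0)\|>\|PW(t_0)\|$, and the Pythagorean identity $\|W(t_0)\|^2=\|PW(t_0)\|^2+\|QW(t_0)\|^2$ would force $\|QW(t_0)\|>\tfrac{1}{\sqrt2}\|W(t_0)\|>\tfrac{\delta}{\sqrt2}\|W(0)\|\ge\Lambda_N\|W(0)\|$, contradicting the displayed bound. Hence for this $\delta$ and $P=P_N$ the operator $S(t_0)$ has the squeezing property.

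I expect the main obstacle to be establishing the \emph{uniform} constants over the whole phase space: one must ensure that the $L_\infty$ bound $M$ and the growth constant $C_0$ depend only on the absorbing radius $\varrho$ and on $t_0$, and not on the individual pair $x,y$, which is where the compactness and invariance of $\mathcal V$, together with the embedding $H^2(\Omega)\hookrightarrow L_\infty(\Omega)$ (valid precisely for $d\le 3$), are essential. The spectral-gap decay of $e^{-tA}Q$ is the other crucial ingredient, and it is available here only because $A$ is self-adjoint.
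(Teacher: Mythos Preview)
Your proof is correct and follows the same overall architecture as the paper's: spectral projection for the self-adjoint operator $A$, the sharp decay $\|e^{-tA}Q\|\le e^{-\lambda_{N+1}t}$ on the range of $Q=I-P$, the mild formulation of the difference $W$, a Lipschitz bound on $\bar F$ along trajectories via Lemma~\ref{THM3}, and the continuous-dependence estimate of Theorem~\ref{THM10}. The one substantive difference lies in how the $L_\infty$ control on $X(s),Y(s)$ is obtained. You exploit the invariance of $\mathcal V$ under $S(\cdot)$ and its boundedness in $\mathcal D(A)$, together with $H^2(\Omega)\hookrightarrow L_\infty(\Omega)$, to get a \emph{uniform-in-$s$} Lipschitz constant $L$; this yields the clean bound $\Lambda_N=e^{-\lambda_{N+1}t_0}+LC_0/\lambda_{N+1}$. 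The paper instead invokes the smoothing estimate \eqref{Ton38}, $\|A^\eta S(s)x\|\le C\,s^{-\eta}$, and the embedding \eqref{Ton23}, which introduces the singular factor $s^{-\eta}$ in the time integral and forces the additional step of showing $\int_0^{t_0}e^{-(t_0-s)\lambda_{N+1}}s^{-\eta}\,ds\to 0$ as $N\to\infty$. Your route is shorter; the paper's has the mild advantage that it relies only on the semigroup smoothing rather than on the invariance of $\mathcal V$. Your closing dichotomy is the contrapositive of the paper's implication ``\eqref{Ton43} $\Rightarrow$ \eqref{Ton44}'' and is logically equivalent.
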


\begin{proof}
The arguments  are quite similar to one in \cite{yagi}.  
Since $A$ is a positive definte self-adjoint operator of $E$ and
$$\mathcal D(A)=
H^{2}_N(\Omega ) \times H^{2}_N(\Omega ) \hspace{1cm} (\text{see } \eqref{Ton8}),$$
the operator $A$ has eigenvalues $\lambda_n $ and corresponding eigenvectors $e_n (n=1, 2, 3\dots)$ such that
\begin{itemize}
  \item the sequence $\{\lambda_n\}_{n=1}^\infty $ is increasing and tends to infinity as $n$ tends to infinity 
  \item the sequence  $\{e_n\}_{n=1}^\infty $ is an orthogonal basis of $E$
\end{itemize}

 Consider an $N$-dimension subspace of $E$ 
$$E_N=\text{Span} \{e_1,e_2,e_3\dots e_N\}$$ 
with some integer $N,$ and the orthogonal projection $P\colon E\to E_N$.  Let's fix $0<t_0<\infty $. To prove that $S(t_0)$  has the squeezing property, it suffices to show  existence of a constant $0<\delta<\frac{1}{4}$  such that
if 
\begin{equation}    \label{Ton43} 
\| P(S(t_0)(x)-S(t_0)(y))\|< (1-P)(S(t_0)(x)-S(t_0)(y)) 
\end{equation}
for some $x, y \in \mathcal V,$
then 
\begin{equation}    \label{Ton44}
\|S(t_0)(x)-S(t_0)(y)\| \leq \delta \|x-y\|.
\end{equation}

Indeed, since $\{e_n\}_{n=1}^\infty $ is an orthogonal basis of $E$,
$$S(t_0)(x)-S(t_0)(y)= \sum_{n=1}^\infty \alpha_i e_i,$$
where $\alpha_i=\langle S(t_0)(x)-S(t_0)(y), e_i \rangle$ ($\langle \cdot,\cdot \rangle $ is the scalar product in $E$).  Then, \eqref{Ton43} gives
$$\sum_{n=1}^N  \alpha_i^2 <   \sum_{n=N+1}^\infty  \alpha_i^2.$$
Therefore, 
\begin{align}
\|S(t_0)(x)-S(t_0)(y)\|^2   
&< 2  \sum_{n=N+1}^\infty  \alpha_i^2 \notag\\
&= 2 \|(1-P)(S(t_0)(x)-S(t_0)(y))\|^2.   \label{Ton45}
\end{align}

In the meantime, using the solution formula in Definition \ref{THM4}, it is easily seen that
\begin{align*}
(1-P)&(S(t_0)(x)-S(t_0)(y))=  e^{-t_0A} (1-P)(x-y) \\
&+\int_0^{t_0} e^{-(t_0-s)A} (1-P) (  \bar F(S(s)(x))-\bar F(S(s)(y))   )ds.
\end{align*}

The norm of the first term in the right-hand side of the latter equality can be estimated as 
\begin{align*}
\|e^{-t_0A} (1-P)(x-y)\| 
&=\|e^{-t_0A} (1-P)(  \sum_{n=1}^\infty \langle x-y, e_n \rangle e_n )\| \\ 
&=\|e^{-t_0A}   \sum_{n=N+1}^\infty \langle x-y, e_n \rangle e_n \| \\
&=\|   \sum_{n=N+1}^\infty \langle x-y, e_n \rangle e^{-t_0\lambda_n} e_n \| \\
&\leq   e^{-t_0\lambda_{N+1}} \|   \sum_{n=N+1}^\infty \langle x-y, e_n \rangle  e_n \| \\
&\leq   e^{-t_0\lambda_{N+1}} \|  x-y \|.
\end{align*}

Similarly, we have an estimate for the norm of the second term:
\begin{align*}
& \| \int_0^{t_0} e^{-(t_0-s)A} (1-P) (  \bar F(S(s)(x))-\bar F(S(s)(y))   )ds \|   \\
& \leq   \int_0^{t_0} \| e^{-(t_0-s)A} (1-P) (  \bar F(S(s)(x))-\bar F(S(s)(y))   )\|ds  \\
&  \leq  \int_0^{t_0} e^{-(t_0-s)\lambda_{N+1}} \| \bar F(S(s)(x))-\bar F(S(s)(y)) \|ds.  
 \end{align*}
 Lemma \ref{THM3} and the inequalities  \eqref{Ton23} and \eqref{Ton38} then give  
\begin{align*}
& \| \int_0^{t_0} e^{-(t_0-s)A} (1-P) (  \bar F(S(s)(x))-\bar F(S(s)(y))   )ds \|   \\
&  \leq c_\infty C \int_0^{t_0} e^{-(t_0-s)\lambda_{N+1}} 
[1+\|A^\eta S(s)(x)\|+\|A^\eta S(s)(y)\|]  \\
& \hspace{2cm} \times \| S(s)(x)-S(s)(y) \|ds\\
&  \leq c_\infty C \int_0^{t_0} e^{-(t_0-s)\lambda_{N+1}} 
[1+   \{1+(\iota_0+\iota_\eta)\|x\|\}  s^{-\eta}    
     +\{1+(\iota_0+\iota_\eta)\|y\|\}  s^{-\eta}] \\
& \hspace{2cm} \times
\| S(s)(x)-S(s)(y) \|ds.    
 \end{align*}
By using Theorem \ref{THM10} and the fact that $x$ and $ y$ belong to the bounded set $ \mathcal V,$ there exists a constant $C_1>0$ such that
\begin{align*}
& \| \int_0^{t_0} e^{-(t_0-s)A} (1-P) (  \bar F(S(s)(x))-\bar F(S(s)(y))   )ds \|   \\
&  \leq C_1 \int_0^{t_0} e^{-(t_0-s)\lambda_{N+1}} 
(1+ s^{-\eta})ds   \| x-y\|   \\
&=   C_1 \left\{\frac{1-e^{-t_0\lambda_{N+1}}}  {\lambda_{N+1}} +\int_0^{t_0} e^{-(t_0-s)\lambda_{N+1}} 
 s^{-\eta}ds     \right\} \| x-y\|  
 \end{align*}

The above estimates for the two terms imply that 
\begin{align*}
& \|(1-P)(S(t_0)(x)-S(t_0)(y))\|  \\
& \leq \left [ e^{-t_0\lambda_{N+1}}  +   C_1 \left\{\frac{1-e^{-t_0\lambda_{N+1}}}  {\lambda_{N+1}} +\int_0^{t_0} e^{-(t_0-s)\lambda_{N+1}} 
 s^{-\eta}ds     \right\}
 \right]      \| x-y\|.  \notag
\end{align*}
Because of $\lim_{N\to \infty} \lambda_{N+1} =\infty$, it is easily seen that
$$\lim_{N\to \infty}  \int_0^{t_0} e^{-(t_0-s)\lambda_{N+1}}  s^{-\eta}ds=0. $$
Therefore, 
\begin{align}
\|(1-P)(S(t_0)(x)-S(t_0)(y))\|  
 \leq \frac{1}{6}\| x-y\|  \label{Ton46}
\end{align}
if $N$ is sufficiently large.
Thus,  \eqref{Ton44} follows from   \eqref{Ton45} and  \eqref{Ton46}.
The proposition thus has been proved.
\end{proof}

Second, let us show that $S(\cdot)$ is Lipschitz continuous in the sense of the following proposition.

\begin{proposition}   \label{THM14}
For every  $0<t_0<\infty$, there exists $L>0$ such that
\begin{align*}
\|S(t)x-S(s)y\| & \leq L[\|x-y\| + (t-s)], \hspace{1cm} 0\leq s\leq t\leq t_0,  x, y\in \mathcal V.
\end{align*}
\end{proposition}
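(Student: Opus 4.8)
The plan is to bound $\|S(t)x-S(s)y\|$ by the triangle inequality via the splitting
$$\|S(t)x-S(s)y\| \leq \|S(t)x-S(t)y\| + \|S(t)y-S(s)y\|,$$
and then to control the first (purely spatial) term by continuous dependence on initial data and the second (purely temporal) term by the fact that an orbit issued from $\mathcal V$ stays in $\mathcal V$ and is therefore uniformly smooth.

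First I would record two uniform bounds on $\mathcal V$. Since $\mathcal V\subset\mathcal B$ is compact in $E$, there is $M<\infty$ with $\|z\|\leq M$ for all $z\in\mathcal V$; and since $\mathcal V$ is invariant, $S(\tau)y\in\mathcal V\subset\mathcal B$ for every $\tau\geq 0$ and every $y\in\mathcal V$, whence $\|A\,S(\tau)y\|\leq\varrho$ and $\|S(\tau)y\|\leq M$ for all $\tau\geq 0$. For the spatial term I would then apply Theorem \ref{THM10}: because $\|x\|,\|y\|\leq M$ and $0\leq t\leq t_0$, the exponential factor appearing there is bounded by the constant $e^{c_\eta[t_0+\frac{2M}{1-\eta}t_0^{1-\eta}]}$, so that
$$\|S(t)x-S(t)y\| \leq L_0\|x-y\|, \qquad 0\leq t\leq t_0,\ x,y\in\mathcal V,$$
with $L_0=\iota_0 e^{c_\eta[t_0+\frac{2M}{1-\eta}t_0^{1-\eta}]}$.

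For the temporal term I would use that $X(\cdot)=S(\cdot)y$ is a strong solution, hence $C^1$ on $(0,\infty)$ and satisfying $\frac{dX}{d\tau}=-A\,X(\tau)+\bar F(X(\tau))$. The two uniform bounds above together with Lemma \ref{THM3} give, uniformly in $\tau>0$,
$$\Bigl\|\tfrac{dX}{d\tau}\Bigr\| \leq \|A\,S(\tau)y\| + \|\bar F(S(\tau)y)\| \leq \varrho + c_\infty(1+M) =: L_1.$$
Integrating over $[s,t]$ (and, when $s=0$, passing to the limit from the right using continuity of $X$ on $[0,\infty)$) yields $\|S(t)y-S(s)y\|\leq L_1(t-s)$. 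Combining the two estimates and setting $L=\max\{L_0,L_1\}$ gives the assertion.

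The main obstacle is the temporal estimate. The local bound \eqref{Ton20} of Theorem \ref{THM6} carries a factor $(t-t_0)^{-1}$ that blows up near the initial time, so for an arbitrary initial datum one cannot bound $\frac{dX}{d\tau}$ uniformly and thereby obtain a clean $(t-s)$ dependence. What removes the difficulty is precisely the invariance and $\mathcal D(A)$-boundedness of $\mathcal V$: every point of the orbit already lies in $\mathcal B$, so $\|A\,X(\tau)\|\leq\varrho$ for all $\tau\geq 0$ and the singular time-weight never appears. Verifying that this uniform $A$-bound indeed holds along the whole orbit is the crux, and it is supplied by the invariance of $\mathcal V$ established earlier.
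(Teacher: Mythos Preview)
Your proposal is correct and follows essentially the same route as the paper: the identical triangle-inequality splitting, Theorem \ref{THM10} for the spatial term, and the integral of $\bar F(S(u)y)-AS(u)y$ over $[s,t]$ (bounded uniformly via the invariance and $\mathcal D(A)$-boundedness of $\mathcal V$) for the temporal term. Your explicit treatment of the endpoint $s=0$ and your identification of the invariance of $\mathcal V$ as the device that removes the $(t-t_0)^{-1}$ singularity in \eqref{Ton20} make the argument slightly more explicit than the paper's version, but the underlying idea is the same.
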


\begin{proof}

Since $\mathcal V$ is bounded in $\mathcal D(A)$ and therefore in $E$, 
Theorem \ref{THM10} provides that 
\begin{equation*} 
\|S(t)x-S(t)y\| \leq C_{1,\mathcal V} \|x-y\|, \hspace{1cm} 0\leq t\leq t_0, x, y\in \mathcal V,
\end{equation*}
with some constant $C_{1,\mathcal V}>0.$

In the meantime, we have
$$\|S(t)y-S(s)y\|=\|\int_s^t [\bar F(S(u)y)-AS(u)y]du\|, \hspace{1cm} 0\leq s<t\leq t_0,  y\in \mathcal V.$$
Since $\mathcal V$ is invariant with respect to $S$ and bounded in $\mathcal D(A)$, it is easily seen from the latter equality that 
$$\|S(t)y-S(s)y\|   \leq C_{2,\mathcal V} (t-s), \hspace{1cm} 0\leq s<t\leq t_0,  y\in \mathcal V$$
with some constant $C_{2,\mathcal V}>0.$ Thus, we observe that
\begin{align*}
\|S(t)x-S(s)y\| & \leq \|S(t)x-S(t)y\| + \|S(t)y-S(s)y\| \\
& \leq C_{1,\mathcal V} \|x-y\| + C_{2,\mathcal V} (t-s), \hspace{1cm} 0\leq s<t\leq t_0,  x, y\in \mathcal V.
\end{align*}
The proof is thus complete.
\end{proof}

We are now ready to state  results on exponential attractors.

\begin{theorem}   \label{THM15}
Let $0<t_0<\infty$ and $\delta, N, $ and $L$ be constants defined in Propositions \ref{THM13} and \ref{THM14}.
Then, for any $0<\theta<1-2\delta$, there exists an exponential attractor $\mathcal A_\theta$ for the dynamical system $(S, \mathcal V, E)$   such that
$$h(S(t)\mathcal V,\mathcal A_\theta) \leq \frac{ 
\sup_{x\in \mathcal V} \|x\|}{2\delta+\theta 
} 
e^{\frac{t\log (2\delta+\theta) }{t_0}}, \hspace{ 1cm} 0<t <\infty.$$

Furthermore,  $\mathcal A_\theta$ has an finite fractal dimension  estimated by
$$d_F(A_\theta) \leq 1 + N \max\{  -\frac{  \log (\frac{3L}{\theta}+1)    }  {\log (2\delta+\theta)}, 1   \}.$$
\end{theorem}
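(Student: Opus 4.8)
The plan is to deduce the result directly from the abstract Theorem \ref{THM2}, since every hypothesis of that theorem has already been verified for the reduced dynamical system $(S,\mathcal V,E)$ in the preceding results. Thus the proof is essentially a matter of collecting the ingredients and invoking Theorem \ref{THM2}; no fresh analytic estimate is required.

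First I would record that the ambient space $E=L_2(\Omega)\times L_2(\Omega)$ is a Hilbert space and that the phase space $\mathcal V$ is a compact subset of $E$, as established in the theorem preceding this one. This matches the standing assumption of Theorem \ref{THM2} that the phase space be a compact subset of a Hilbert space. Next, fixing $0<t_0<\infty$, I would invoke Proposition \ref{THM13} to supply the squeezing property of $S(t_0)$ with a constant $0<\delta<\frac14$ and an orthogonal projection $P$ of finite rank $N$, and Proposition \ref{THM14} to supply, for the same $t_0$, a constant $L>0$ with $\|S(t)x-S(s)y\|\leq L(\|x-y\|+(t-s))$ for $0\leq s\leq t\leq t_0$ and $x,y\in\mathcal V$. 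Exchanging the roles of $(t,x)$ and $(s,y)$ upgrades this to the symmetric bound $\|S(t)x-S(s)y\|\leq L(\|x-y\|+|t-s|)$, which is exactly the Lipschitz condition {\rm (iii)} of Theorem \ref{THM1} with $L_1=L$.

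With these three facts in place, the hypotheses of Theorem \ref{THM2} hold for $(S,\mathcal V,E)$, and its conclusion furnishes, for every $0<\theta<1-2\delta$, an exponential attractor $\mathcal A_\theta$ satisfying both the exponential decay estimate and the fractal-dimension bound asserted in the statement, with the constant $L_1$ of Theorem \ref{THM2} replaced by $L$ throughout. The only routine bookkeeping is to pass from the diameter $R$ of $\mathcal V$ appearing in Theorem \ref{THM2} to the quantity $\sup_{x\in\mathcal V}\|x\|$ written in the statement. I do not expect any genuine obstacle, because the substantive work—the squeezing estimate and the two-variable Lipschitz continuity in time and initial data—has already been carried out in Propositions \ref{THM13} and \ref{THM14}; the present theorem is a direct application of the abstract machinery of Theorem \ref{THM2}.
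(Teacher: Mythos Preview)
Your proposal is correct and follows essentially the same route as the paper: the paper's own proof consists of a single sentence observing that Propositions \ref{THM13} and \ref{THM14} verify all hypotheses of Theorem \ref{THM2}, so the conclusion follows directly. Your write-up is in fact more careful, as you make explicit the Hilbert-space structure of $E$, the compactness of $\mathcal V$, the symmetrization of the Lipschitz bound, and the minor bookkeeping involving the diameter $R$ versus $\sup_{x\in\mathcal V}\|x\|$.
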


\begin{proof}
Propositions \ref{THM13} and \ref{THM14} show that all the assumptions of Theorem \ref{THM2} take place. Thus, the conclusions in Theorem \ref{THM15} follow from one in Theorem \ref{THM2}. 
\end{proof}

\section{An example}  \label{example}

Let us consider an example of the system \eqref{Ton1}. For numerical simulations, we use the finite difference schemes presented in \cite{Garvie}.

Set
$\gamma=15$, $\beta=1.5$, $\rho=13$, $\alpha=7$, $a=103$, $b=77$, $k=0.125.$ Consider \eqref{Ton1} in the two-dimensional space with random initial value 
$(u_0,v_0)$  near the stationary solution. (In fact, $23<u_0<24$ and $24<v_0<25$.)

We calculate $26\times 26$ values of $(u(T),v(T))$ in the rectangular $[0,25]\times [0,25],$ where $T=90$ and $T=150$.  Since the geometry of the concentration of the activator $u$ can be interpreted as describing the coat pattern of a specific animal, we  illustrate this function in Figure \ref{Fig1}.

 \begin{figure}[H]
 \begin{center}
\includegraphics[scale=0.4]{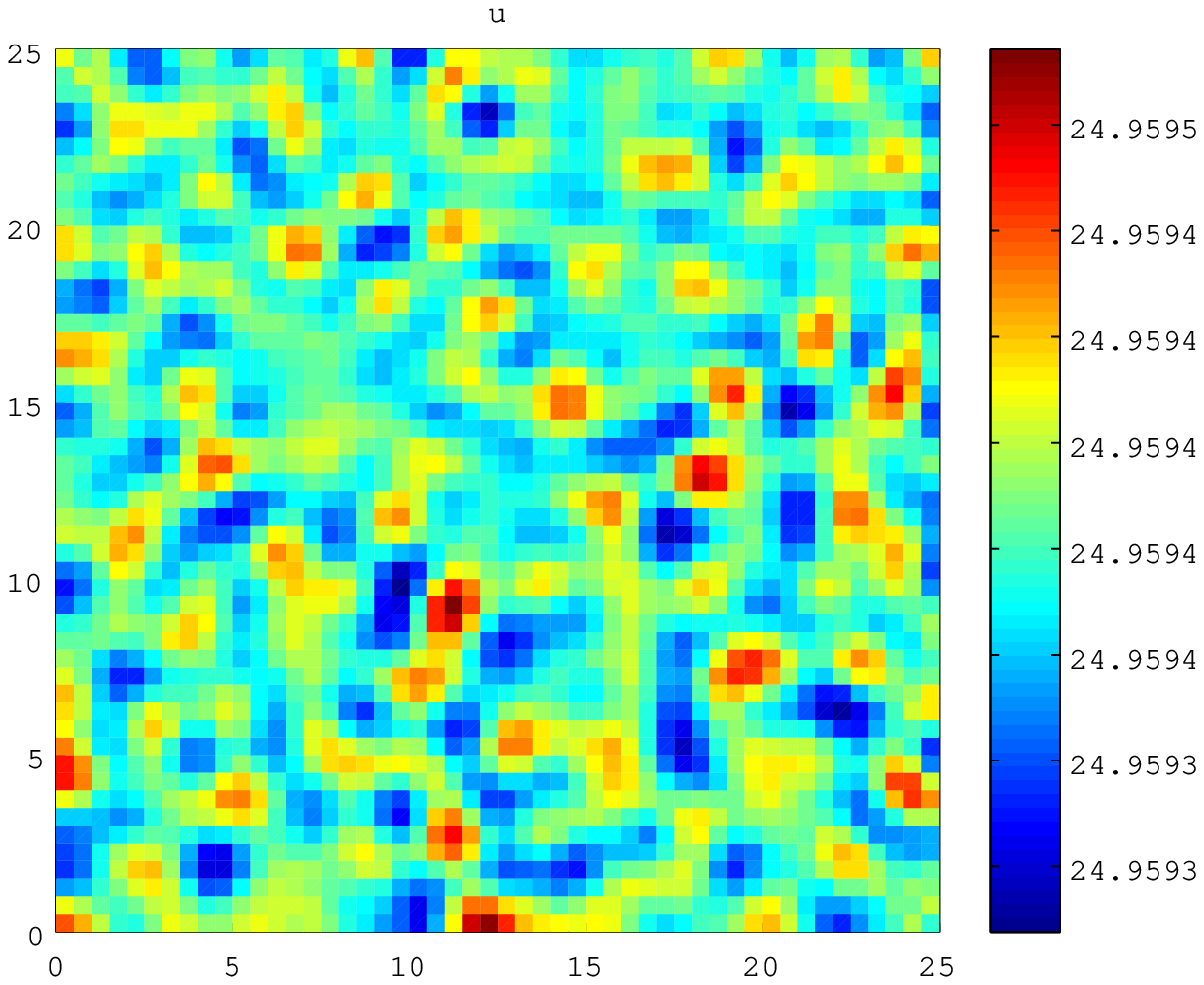}
\includegraphics[scale=0.4]{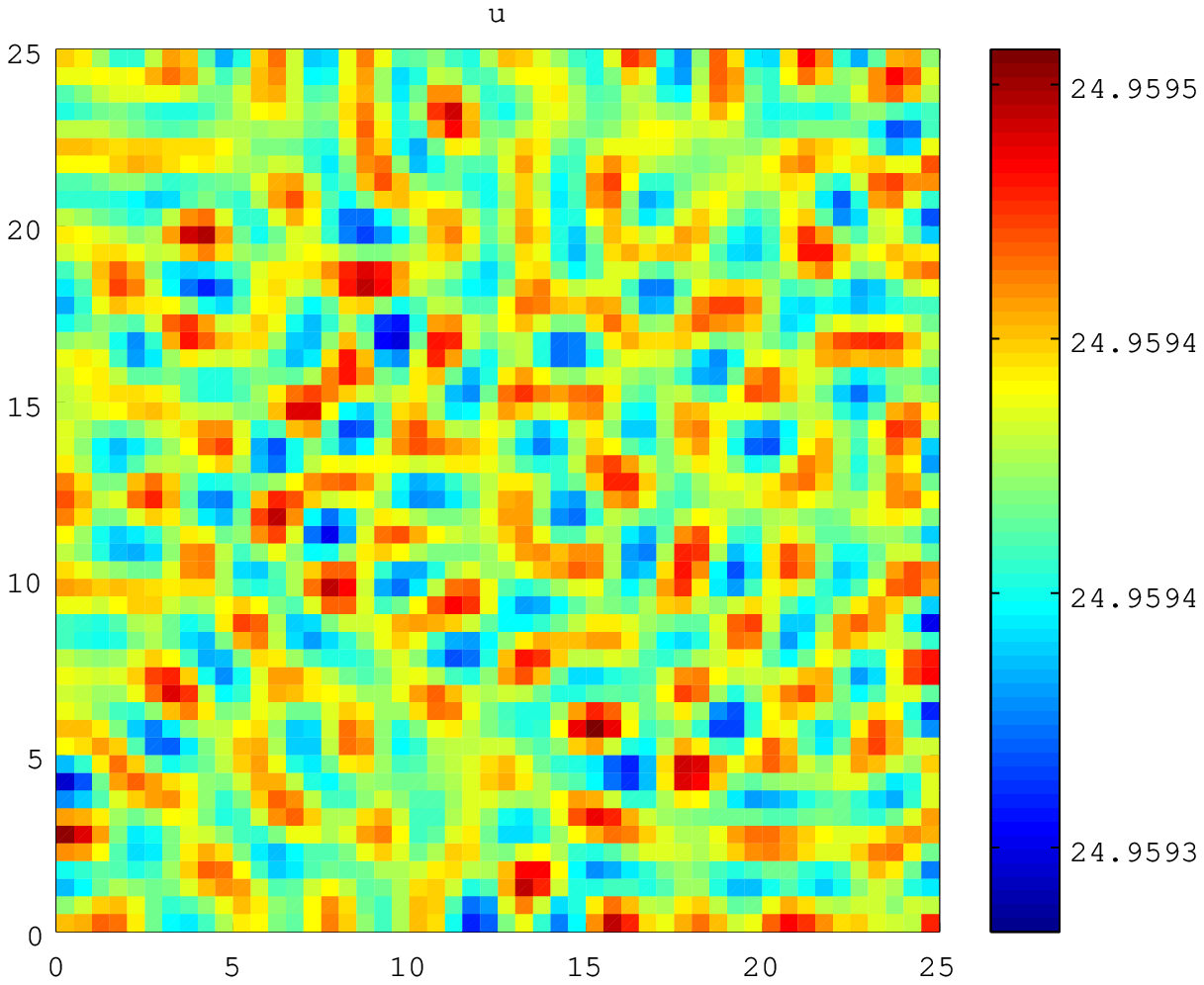}
 \caption{Simulation of the system \ref{Ton1} for the two-dimensional case $\Omega=[0,25]\times [0,25]$ with  $\gamma=15$, $\beta=1.5$, $\rho=13$, $\alpha=7$, $a=103$, $b=77$, $k=0.125.$ The two figures show the $u$-component at $T=90$ and $T=150$ of a solution starting  at a random perturbation near the stationary solution, where color represents the concentration of the activator.} 
 \label{Fig1}
 \end{center}
 \end{figure}


\begin{thebibliography}{999}

\bibitem{Cahn-Hilliard} Cahn, J. W., Hilliard, J. E.,
Free energy of a nonuniform system I. Interfacial free energy, 
J. Chem. Phys. {\bf 28} (1958) 258--267. 



\bibitem{Garvie} Garvie, M. R., 
Finite difference schemes for reaction-diffusion equations modeling predator-prey interactions in MATLAB,
  Bull. Math. Biol. {\bf 69} (2007) 931--956.

\bibitem{Gierer-Meinhardt} Gierer, A., Meinhardt, H.,
A theory of biological pattern formation, 
   Kybernetik {\bf 12} (1972) 30--39.

\bibitem {Jiang} Jiang, H., 
  Global existence of solutions of an activator-inhibitor system,
  Discrete Contin. Dyn. Syst. {\bf 14} (2006)  737--751.

\bibitem{Li-Chen-Qin} Li, M. D., Chen, S.H., Qin, Y. C.,
   Boundedness and blow up for the general activator-inhibitor model,
   Acta Math. Appl. Sinica {\bf 11} (1995) 59--68.

\bibitem{Masuda-Takahashi} Masuda, K., Takahashi, K.,
  Reaction-diffusion systems in the Gierer-Meinhardt theory of biological pattern formation,
  Japan J. Appl. Math. {\bf 4} (1987)  47--58.

\bibitem{Meinhardt} Meinhardt, H., 
   Models of Biological Pattern Formation, 
   Academic Press, 1982.

\bibitem{Murray81} Murray, J. D., A pre-pattern formation mechanism for animal coat markings,
    J. Theor. Biol. {\bf 88} (1981) 161--199.

\bibitem{Murray2003}  Murray, J. D.,
 Mathematical Biology II: Spatial Models and Biomedical Applications, 3nd Edition, Springer-Verlag, New York, 2003.



\bibitem{Rothe} Rothe, F., Global Solutions of Reaction-Diffusion Systems, 
   Lecture notes in Math. {\bf 1072}, Springer-Verlag, 1984. 

\bibitem{Sander-Wanner} Sander, E., Wanner, T.,
    Pattern formation in a nonlinear model for animal coats,
   J. Differential Equations {\bf 191} (2003) 143--174.

\bibitem{Thomas} Thomas, D.,
 Artificial enzyme membranes, transport, memory, and oscillatory phenomena, in: D.
Thomas, J.P. Kernevez (Eds.), Analysis and Control of Immobilized Enzyme Systems, Springer,
Berlin, 1975, 115--150.


\bibitem {triebel} Triebel, H., Interpolation Theory, Function Spaces, Differential Operators, North-Holland, 1978. 

\bibitem {yagi} Yagi, A., Abstract parabolic evolution equations and their applications, Springer, Berlin, 2010. 
\end{thebibliography}
\end{document}